\DeclareMathOperator{\sol}{div} 
\DeclareMathOperator{\curl}{curl}
\renewcommand{\div}{{\rm div}} 
\newcommand{\norm}[1]{\left\|#1\right\|}   
\newcommand{\abs}[1]{\left\lvert#1\right\rvert}  
\newcommand{\field}[1]{\mathbb{#1}}
\def\Mdiv{M_{\div\ufi}(m)}
\def\R{\field{R}} 
\def\T{\field{T}} 
\def\D{\field{D}}
\def\S{\field{S}} 
\newcommand{\rntou}[1]{\R^{#1}}    
\def\carky{\sp{\prime\prime}}
\def\Id{\field{I}} 
\def\epsil{\varepsilon}
\newcommand{\vektor}[1]{{\mathbf{#1}}}
\def\ro{\varrho}
\def\en{\vektor{n}}
\def\Phib{\boldsymbol\Phi}
\def\omeg{\boldsymbol{\omega}}
\def\ef{\vektor{F}}
\def\Gef{\mathcal{\widetilde G}}
\def\u{\vektor{u}}
\def\uf{\widetilde{\u}}
\def\ufi{\vektor{U}  }
\def\rof{\widetilde{\ro}}
\def\rf{\widetilde{r}}
\def\lam{{\ell}} 
\def\up{\vektor{V}}
\def\rp{R}
\newcommand{\de}[1]{\mathrm{d}{#1}}     
\def\dx{\de{x}}	
\newcommand{\inte}[1]{\int\limits_{\Omega}{#1}\, \dx  }
\newcommand{\inth}[1]{\int_{\partial\Omega}{#1}\, \de{S}  }
\newcommand{\refx}[1]{(\ref{#1})}
\def\qqquad{\qquad\quad}
\def\Pg{P_\mathsmaller{\mathsmaller{\nabla}}}
\def\PH{P_H}
\theoremstyle{plain}
\newtheorem{thm}{Theorem}
\newtheorem{lem}{Lemma}
\newtheorem{prop}{Proposition}
\newtheorem*{thm*}{Theorem}
\newcommand{\lnorm}[2]{\norm{#1}_{#2}}
\newcommand{\lnorms}[2]{{\bigl\|#1\bigr\|}_{#2}}
\title{Steady solutions to viscous shallow water equations.\\ 
The case of heavy water.}
\author[1]{\v{S}imon Axmann\thanks{axmas7am@karlin.mff.cuni.cz}}
\author[2]{Piotr Bogus\l{}aw Mucha\thanks{p.mucha@mimuw.edu.pl; Corresponding author}}
\author[1]{Milan Pokorn\'{y}\thanks{pokorny@karlin.mff.cuni.cz}\small}
\affil[1]{Charles University in Prague, Faculty of Mathematics and Physics}
\affil[2]{Institute of Applied Mathematics and Mechanics, University of Warsaw}
\begin{document}
\maketitle
{\bf Abstract.} In this note, we show the existence of regular solutions to the stationary version of the Navier--Stokes system for compressible fluids with a density dependent viscosity,
known as the shallow water equations. For arbitrary large forcing we are able to construct a solution, provided the total mass is sufficiently large.
The main mathematical part is located in the construction of solutions. Uniqueness is impossible to obtain, since the gradient of the velocity
is of magnitude of the force. The investigation is connected to the corresponding singular limit as Mach number goes to zero and methods for weak solutions to the compressible Navier--Stokes system.

\bigskip

{\bf MSC:} 35Q35, 76N10

{\bf key words:} steady compressible Navier-Stokes system, shallow water equation, low Mach number limit, density dependent viscosities, large data, existence via Schauder type fixed point theorem.

\section{Introduction and the main result}

The subject of the paper is  the following steady version of the Navier--Stokes system for compressible fluid
\begin{gather}
     \sol(\ro \u) = 0, \label{RKs} \\ 
 \sol(\ro \u\otimes\u) =\sol \T+\ro\ef,   \label{MEs}
\end{gather}
 where
 $\ro$ is density, $\u$ the velocity field, $\ef$ the specific external force, and the stress tensor $\T(\ro,\nabla\u) = \S(\ro,\nabla\u) - p(\ro)\Id$ with
  pressure $p(\ro) = \ro^{\gamma} $, and the viscous stress $\S$ satisfying the Stokes law for the Newtonian fluid 
$$\S(\ro,\nabla \u) = \mu(\ro)\bigl(\nabla\u +\nabla^{T}\u \bigr) + \lambda (\ro) \sol\u\Id,$$
  with the viscosity coefficients and pressure (some remarks to a more general situation will be given at the end of this section)
  $$
  \mu(\ro) = \ro, \qquad \lambda(\ro)=0, \qquad p(\ro) = \ro^{\gamma}.
  $$
  We denote the symmetric part of the velocity gradient by $\D(\u)=\frac{1}{2}\bigl(\nabla\u +\nabla^{T}\u \bigr)$.
  For the two dimensional equations and $\gamma=2$ the model coincides with the  well known shallow water equations. However,
  we concentrate on the three dimensional version of the system  with general $\gamma>1$.
The system is supplemented with the slip boundary condition for the velocity 
\begin{gather}
\u\cdot\en =0,\label{imper}\\
\en\cdot \S(\ro,\nabla\u)\cdot\boldsymbol{\tau}^k   +f  \u \cdot \boldsymbol{\tau}^k =0 \text{ at }\partial\Omega,\label{slip} 
\end{gather}
where  $\boldsymbol{\tau }^k,\:k=1,2$ are two linearly independent tangent vectors to $\partial\Omega$,  $\en$ denotes the normal vector and the constant $f$ is the non-negative friction coefficient.
Furthermore, we assume  the total mass is prescribed,
\begin{equation}\label{total}
 \int_\Omega \ro \ \dx = |\Omega| m,
\end{equation}
with $m>0$, a given number. Domain $\Omega$ is assumed to be bounded, three-dimensional with a smooth, say $C^2$, boundary.

Our goal is to construct a regular solution of class $(\u,\ro) \in W^{2,p}(\Omega) \times W^{1,p}(\Omega)$ for arbitrary, possibly large, external force $\ef$, provided the total mass is large enough. The slip boundary conditions allow 
to use the Helmholtz decomposition and effectively use the information carried by the effective viscous flux. Here we meet the theory of weak solutions to the compressible Navier--Stokes system and approaches from \cite{NoPa94}. 
The a priori estimate is relatively easy to get, but the main difficulty lies in the construction of the solutions. Since the solutions are essentially large, we have to modify standard applications of the Schauder fixed point 
theory. Our main theorem can be compared to results of Choe and Jin \cite{ChJi00} (see also \cite{Doetal15} for the heat-conducting case). Authors study there the low Mach number problem for the steady system with Dirichlet boundary conditions.
Working in the $H^m$ framework, they obtain also large solutions as a perturbation of corresponding incompressible flows. The statement of the problems (our and from \cite{ChJi00}) are similar for $\gamma =2$, but in the case of $\gamma \in (1,2)$ we obtain
essentially different asymptotics of the system.

\smallskip

Our main result  reads as follows.

\begin{thm} \label{main}
Let $\gamma >1$.  Suppose that $\Omega$ is a smooth bounded  domain in $\rntou{3},$ which is not axially symmetric, $\ef\in L^{p}(\Omega)$ for some $p\in(3,6)$ and $m$ 
is sufficiently large with respect to the norm of $\ef$. Then there exists at least one strong solution to the Navier--Stokes equations
\refx{RKs}--\refx{total} in the class $(\ro,\u)\in W^{1,p}(\Omega)\times W^{2,p}(\Omega).$
\end{thm}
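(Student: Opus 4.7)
The plan is to construct the solution via Schauder's fixed point theorem, exploiting the fact that for $m\gg 1$ the system is a perturbation of the trivial equilibrium $(\ro,\u) \equiv (m,\vektor{0})$. The natural reformulation is to set $\ro = m + r$ with $\int_\Omega r\,\dx=0$ and rewrite the pressure gradient as $\nabla p(\ro)= \gamma(m+r)^{\gamma-1}\nabla r$; the factor $m^{\gamma-1}$ then plays the role of an inverse squared Mach number, stiffening the density so that estimates can be closed even though the velocity itself is not small.

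First I would derive formal a priori estimates. Testing the momentum equation \refx{MEs} with $\u$ and using the slip conditions \refx{imper}--\refx{slip}, together with Korn's inequality (available under the non-axisymmetry assumption on $\Omega$), yields
\[
\int_\Omega \ro\,|\D(\u)|^2\,\dx + f\int_{\partial\Omega}|\u|^2\,\de{S} \leq C\int_\Omega \ro\,\u\cdot\ef\,\dx,
\]
hence $\|\u\|_{H^1}\lesssim \|\ef\|_{L^p}$ after absorbing one factor of $m$. The density bound comes from testing the momentum equation with the Bogovskii corrector $\mathcal{B}[r]$ that solves $\sol\mathcal{B}[r]=r$; this converts the stiff pressure contribution into a control of $\|r\|_{L^{2\gamma}}$ of order $m^{-(\gamma-1)}$. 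Iterating with higher $L^q$ norms of $r$ and applying $L^p$ elliptic regularity to the momentum equation yields $\|r\|_{W^{1,p}}=o_m(1)$ and $\|\u\|_{W^{2,p}}=O(1)$ as $m\to\infty$.

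With these bounds in hand, I would set up the fixed point on a closed convex bounded set $\mathcal{K}\subset W^{1,p}(\Omega)\times W^{2,p}(\Omega)$ consisting of pairs $(\rf,\uf)$ with $m/2\leq\rf\leq 3m/2$, $\int_\Omega\rf\,\dx=m|\Omega|$, $\uf\cdot\en=0$ on $\partial\Omega$, and radii tuned to the a priori analysis. Given $(\rf,\uf)\in\mathcal{K}$, the new density $\ro$ is produced from the linear transport--continuity equation $\sol(\ro\,\uf)=0$ with the mass constraint \refx{total}, and the new velocity $\u$ from the linear elliptic problem obtained by linearizing \refx{MEs},
\[
-\sol\bigl(\rf(\nabla\u+\nabla^{T}\u)\bigr)+\gamma\rf^{\gamma-1}\nabla\ro = \rf\,\ef - \sol(\rf\,\uf\otimes\u),
\]
with boundary conditions \refx{imper}--\refx{slip}. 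The slip condition permits the Helmholtz decomposition $\u=\vektor{w}+\nabla\phi$ with $\sol\vektor{w}=0$, $\vektor{w}\cdot\en=0$, and the effective viscous flux $G=2\rf\,\sol\u - \gamma\rf^{\gamma-1}(\ro-m)$ satisfies a favorable elliptic problem which, together with the continuity equation, delivers the missing $W^{1,p}$ control of $\ro-m$.

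The main obstacle will be proving that this map sends $\mathcal{K}$ into itself. The pressure contribution $\gamma m^{\gamma-1}\nabla(\ro-m)$ on the right-hand side of the momentum equation is of order $m^{\gamma-1}\|\ro-m\|_{W^{1,p}}$, too large to treat naively without cancellations; those cancellations come from the effective viscous flux identity combined with a Bogovskii-type duality that regains precisely a factor $m^{-(\gamma-1)}$. The convective term $\sol(\rf\,\uf\otimes\u)$, of size $m\|\uf\|_{W^{2,p}}\|\u\|_{W^{2,p}}$, must be dominated by the principal viscous term, which fixes the admissible radius of $\mathcal{K}$ in terms of $\|\ef\|_{L^p}$ and forces $m$ to be sufficiently large relative to $\|\ef\|_{L^p}$. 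Once self-mapping is established, continuity of the map in the weaker topology $W^{1,q}\times W^{2,q}$ for some $q<p$, combined with compact embedding into $W^{1,p}\times W^{2,p}$, yields a fixed point by Schauder's theorem; this fixed point is the strong solution claimed in Theorem~\ref{main}.
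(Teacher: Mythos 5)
Your overall strategy — writing $\ro=m+r$, exploiting the stiff pressure factor $m^{\gamma-1}$, using the slip condition to invoke the Helmholtz decomposition and the effective viscous flux, bounding $r$ via a Bogovskii test, and closing with a Schauder fixed point — matches the paper in spirit. However, there are two substantive problems.

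\textbf{The scaling claim for $r$ is wrong for $\gamma\in(1,2]$.} You assert $\norm{r}_{W^{1,p}}=o_m(1)$, but the a priori estimate that the paper closes is $m^{\gamma-2}\norm{r}_{1,p}+\norm{\u}_{2,p}\leq C_\ef$, i.e.\ $\norm{r}_{1,p}\lesssim C_\ef\,m^{2-\gamma}$. This is $o(1)$ only for $\gamma>2$; for $\gamma=2$ it is merely bounded, and for $\gamma\in(1,2)$ it \emph{grows} with $m$. This is precisely the distinction the paper emphasizes when comparing with Choe--Jin: the linearization and the choice of fixed-point sets must track the $m$-dependence through the weighted quantity $\Xi=m^{\gamma-2}\norm{r}_{1,p}+\norm{\u}_{2,p}$, not through absolute smallness of $r$. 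A construction built around $\norm{r}_{W^{1,p}}=o_m(1)$ would silently restrict to $\gamma>2$.

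\textbf{The decoupled transport step is ill-posed as stated.} You propose to generate the new density from the linear transport equation $\sol(\ro\,\uf)=0$ with the mass constraint, \emph{before} solving the momentum equation. But the steady transport equation $\uf\cdot\nabla r+r\,\sol\uf=-m\,\sol\uf$ has no zeroth-order coercive term, and for $\uf$ of the size allowed here (not small: $\norm{\uf}_{2,p}$ is only $O(C_\ef)$) it is not solvable by the Novotn\'y-type theory. The paper gets around exactly this: it never solves the raw continuity equation for $r$ alone. Instead it combines the potential part of the momentum equation with the continuity equation to obtain the effective-viscous-flux relation, yielding a stationary transport equation of the form $r+\sol\bigl(2r\uf/(\gamma m^{\gamma-1})\bigr)=P/(\gamma m^{\gamma-2})$, in which the transport coefficient is $O(C_\ef/m^{\gamma-1})$ and hence small for $m$ large. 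That smallness is what makes the transport operator invertible with $W^{1,p}$ bounds. Without this coupling you lose the mechanism that renders the density step well-posed. Relatedly, the paper does not run a single Schauder iteration: it first proves a fully linear lemma via Leray--Schauder with an elliptic regularization, then removes the viscosity linearization and the density linearization by two separate Banach contraction arguments, and only at the end applies Schauder to the convective velocity $\ufi\mapsto\u$, taking compactness in $W^{1,\infty}$ (not in $W^{1,q}\times W^{2,q}$; incidentally your stated compact embedding goes the wrong way: $W^{1,q}$ for $q<p$ does not embed compactly into $W^{1,p}$ on a bounded domain — it is $W^{2,p}$, $p>3$, that compactly embeds into $W^{1,\infty}$). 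This nested structure is not a cosmetic choice; it is what lets each linearization be solved and estimated uniformly in $m$.
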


First, let us  note  the result can be proved in the two dimensional case, too. 
The methods and estimates are the same (coefficients are slightly different, but considerations are easier).
Thus we leave this case.
We could also deal with the axially symmetric domain $\Omega$; however,
we would be required to put several artificial and technical assumptions.
 
The most important case $\gamma =2$ is just the model of shallow water. Due to the structure of the 
equations the system is related to the low Mach number limit \cite{Bre,Bre2}.
%
%
Concerning the known existence results of steady non-constant solutions near equilibrium, we refer to M. Padula\cite{Padu87},
H. Beir{\~a}o da Veiga \cite{Beir87} and R. Farwig \cite{Farw88, Farw89}, see also T. Piasecki \cite{PiasJDE}.
The corresponding low Mach number limit is extensively studied as well \cite{FeNo07,Alaz06,DeGr99}.

\smallskip 

Now, we  perform a formal analysis of our system. Provided the total mass is  large, we expect
that the density can be considered in the following form
\begin{equation*}
 \ro = m + r, \mbox{ \ \ with \ \ } \int_\Omega r \ \dx =0.
\end{equation*}
Our analysis will be based on this assumption. Provided $m$ dominates $r$, we meet the titled case of heavy density fluids. 
However, as we will see, this restriction does not  limit the magnitude of the gradient of the velocity. Hence, for large forces
we obtain turbulent flows, thus any uniqueness property is not possible to reach.
Using  \eqref{RKs}--\eqref{MEs} and the form of $\S$, we restate the system as follows
\begin{gather}
     m\sol \u  + \u\cdot\nabla r + r\sol\u  = 0, \label{RKs1} \\ 
(m+r)\u\cdot\nabla\u -m\Delta\u -    m \nabla\sol\u + \gamma (m+r)^{\gamma -1}\nabla r = 
  2  \D(\u) \nabla r+r\Delta\u +   r \nabla\sol\u+\ro\ef.   \label{MEs1}
\end{gather}
Formally,  norms of solutions are essentially smaller than $m$,  the problem transforms into the following one
\begin{gather}
     \sol \u = o(m), \label{RKs2} \\ 
\u\cdot\nabla\u -\Delta\u -    \nabla\sol\u + \gamma m^{\gamma -2}\nabla r = \ef+o(m),   \label{MEs2}
\end{gather}
where $o(m) \to 0$ as $m\to \infty$ in suitable norms. Here we see that the case $\gamma=2$, which corresponds to the low Mach number limit,
is distinguished among all case; the left-hand side is independent of $m$. In the case $\gamma \in (1,2)$ the norms of  constructed 
density  depend strongly on $m$. This picture illustrates the key difference to results from \cite{ChJi00}. The structure of system (\ref{RKs2})-(\ref{MEs2}) is more complex
and the linearization strongly depends on parameter $m$.

We skipped a possible generalization of the result in the case 
\begin{equation*}
 \mu(\ro) \sim \ro^{L} \mbox{ \ \ for \ \ } L >1, \mbox{\ and/or \ } \lambda(\ro) \sim \mu(\ro).  
\end{equation*}
Then, looking at our formal asymptotic \eqref{RKs2}--\eqref{MEs2}, we would obtain on the left-hand side of \eqref{MEs2} 
$$
m^{1-L}\u \cdot \nabla \u - \Delta \u -\nabla \sol \u
$$
which implies that the convective term is marginalized and we  arrive at the case of small solutions with 
obvious uniqueness, what is not our aim. Similarly, the proof for $\lambda(\ro)\sim \mu(\ro)$ follows the same lines as for $\lambda(\ro) \equiv 0$. The additional terms behave as the terms we deal with for $\mu(\ro)$.

\smallskip 

Within the paper we use the standard notation. By $\|\cdot\|_{m,p}$ we denote the norm of the Sobolev space $W^{m,p}(\Omega)$ defined over domain $\Omega$ for $m\in \mathbb{N}, p\in [1,\infty]$. Norms of functionals and of traces are displayed by 
their full symbols.

\section{A priori estimates}

In this section we construct the a priori estimate, which determines the class of regularity of our sought solutions.
Assume 
$$\ro=m+r,$$
where $\inte{r\:}=0$ and $\frac{1}{\abs{\Omega}}\inte{\ro\:} = m$, with $m$  large enough. For $p>3$ we define 
the following quantity
\begin{equation}
\Xi = m^{\gamma -2} \norm{r}_{{1,p}} + \norm{\u}_{{2,p}} \label{notation}
\end{equation}and consider solutions for which 
\begin{equation}
m\gg \Xi + \lnorm{\ef}{p}.\label{dense}
\end{equation}

System (\ref{RKs})--(\ref{MEs}) can be then rewritten as follows
\begin{gather}
     m\sol \u  + \u\cdot\nabla r + r\sol\u  = 0, \label{RKs3} \\ 
(m+r)\u\cdot\nabla\u -m\Delta\u  + \nabla p(m+r) = 
  2  \D(\u) \nabla r+r\Delta\u +   r \nabla\sol\u+\ro\ef.   \label{MEs3}
\end{gather}
The basic energy estimate reads
$$\inte{2m\abs{\D(\u)}^2} +\int_{\partial \Omega} f |\u|^2  {\mathrm{ d}} S = \inte{\ro\ef\cdot\u } 
-\inte{2r\abs{\D(\u)}^2},
$$
hence if we further assume  
$ m>2\lnorm{r}{\infty}$  (see \refx{dense}),  we obtain, due to Korn's inequality,
\begin{equation}
\lnorm{\u}{1,2} \leq C \lnorm{\ef}{6/5}. \label{EI}
\end{equation}
Note that if  $f= 0$, the assumption that the domain can not be 
axially symmetric is needed for Korn's inequality to hold \cite{MuJDE}.
%
%
 Further, we test the momentum equation with a function $-\Phib$, $\Phib=\mathcal{B}[r],$ where  $\mathcal{B}\sim\sol^{-1}$~denotes the Bogovskii operator, accordingly we have $\lnorm{\Phib}6\leq C\lnorm{\nabla\Phib}2\leq C\lnorm{r}2$. This yields 
\begin{equation*}
 (\gamma m^{\gamma-1}-\lnorm{r}\infty)\lnorm{r}2^2\\ \leq  C m\biggl(\lnorm{\nabla\u}2 \lnorm{\nabla\Phib}2 +  \lnorm{\u}3\lnorm{\nabla\u}2 \lnorm{\Phib}6  +  \lnorm{\ef}{6/5}\lnorm{\Phib}6 \biggr),  
\end{equation*}
 hence using Young's inequality and \refx{EI}
 $$ m^{\gamma-2}\lnorm{r}2^2  \leq \frac{C}{m^{\gamma-2}} \Bigl( {\lnorm{\ef}{{6}/{5}}^4}+ {\lnorm{\ef}{{6}/{5}}^2}   \Bigr).$$
 
 In order to recover the effective viscous flux, we  apply the Helmholtz decomposition for functions in $L^p(\Omega)$ with values in ${\rntou{3}}$. Linear operators read
\begin{equation}
\Pg: L^p(\Omega)\to W^{1,p}(\Omega) \qquad \text{and} \qquad \PH:L^p(\Omega)\to L_{\sol}^p(\Omega) \label{Helm}
\end{equation}
with the properties $\vektor{g} = \PH(\vektor{g}) + \nabla \Pg(\vektor{g})$, $\sol \vektor{g} = \Delta \Pg(\vektor{g}) $, and $\en \cdot\PH(\vektor{g})  = 0 $ on $\partial\Omega.$
We  estimate the solenoid and  gradient part of the momentum equation separately.

 First, applying the $\curl$-operator on \refx{MEs3} yields for $\omeg=\curl\u$
$$
 - m  \Delta \omeg =\:\curl\Bigl( - \ro\u\cdot\nabla\u 
  + \sol \bigl( 2r  \D(\u)\bigr)+\ro\ef  \Bigr) \quad \mbox{ in } \Omega 
$$
with the boundary conditions
$$
\left.\begin{array}{rl}
   \sol \omeg =& 0,\\ 
  \omeg\cdot\boldsymbol{\tau}_1 =& (f/(m+r)- 2\chi_2)\u\cdot\boldsymbol{\tau}_2,\\ 
   \omeg\cdot\boldsymbol{\tau}_2 =& (2\chi_1-f/(m+r))\u\cdot\boldsymbol{\tau}_2
	\end{array}
	\right\}
	\text{ on }\partial\Omega,
$$
 with $\chi_i$ denoting the curvatures corresponding to the directions $\boldsymbol{\tau}_i.$ 
 The form of boundary conditions in the above system comes from features of the slip boundary relations \cite{MuRa,MuJDE}.
 Thus, according to the elliptic regularity theory (see also \cite{MuPo07})
 $$m \norm{\omeg}_{{1,p}} \leq   C\Big(\norm{\curl\bigl( - \ro\u\cdot\nabla\u 
   + \sol \bigl( 2r  \D(\u)\bigr)+\ro\ef \bigr)}_{(W^{1,p{\sp{\prime}}}(\Omega))^*} +m\norm{ \u}_{W^{1-\frac{1}{p},p}(\partial\Omega)} \Big) ,$$
where $\norm{ \u}_{W^{1-\frac{1}{p},p}(\partial\Omega)}\leq C \norm{\u}_{W^{1,p}(\Omega)}$ and $(W^{1,p'}(\Omega))^*$ denotes the dual space to $W^{1,p'}_0(\Omega)$. Further, $\PH\u$ satisfies the overdetermined system
 \begin{align}
  \curl\PH\u = \: &\omeg\text{ in }\Omega,\nonumber\\
  \sol\PH\u =\: & 0\text{ in }\Omega,\label{rotdiv}\\
 \PH\u\cdot\en =\: & 0\text{ on }\partial\Omega,\nonumber
  \end{align}
 yielding \cite{Solo73, MuPo14} $ \lnorm{\nabla^2\PH\u}p\leq C  \norm{\omeg}_{1,p}. $
 Thus,
\begin{equation}
\lnorm{\nabla^2\PH\u}{p} \leq \frac{1}{m} C\bigl( \lnorm{\ro\u\cdot\nabla\u}p + \lnorm{\nabla \u}{\infty}\lnorm{\nabla r}p  +  \lnorm{r}\infty\lnorm{\nabla^2 \u}p 
 + \lnorm{\ro\ef}p+m\lnorm{\nabla\u}p\bigr).\label{Stokes}
\end{equation}

Similarly, the potential part of the momentum equation \refx{MEs3} 
reads\footnote{We denote $\{g\}_\Omega=\frac{1}{\abs{\Omega}}\inte{ g }  $. }
\begin{equation*}
p(\ro) - \{p(\ro)\}_\Omega - 2 m \sol\u = \Pg\bigl( \mathcal{G} + m \Delta \PH\u
 \bigr),\label{h10}
\end{equation*}
where we put 
\begin{equation}\label{h11}
 \mathcal{G}=- \ro\u\cdot\nabla\u 
  + 2  \sol\bigl(r\D(\u)\bigr)+\ro\ef.
\end{equation}
In our considerations we  keep in mind that $\Pg(\Delta\u)=\Delta\Pg\u + \Pg(\Delta\PH\u)$ and $\Delta \Pg\u = \sol\u.$

We  use the Taylor expansion, in order to observe
 \begin{equation}
  p(\ro) = (m+r)^\gamma = m^\gamma+ \gamma m^{\gamma-1}r + \frac{1}{2} p\carky(\xi) r^2,\label{TaylorL}
 \end{equation}
 where $\xi$ lies between $m$ and $m+r$, whence $\abs{p\carky(\xi) r^2}\leq C m^{\gamma-2}r^2$. Subtracting the average from \refx{TaylorL} yields
 \begin{equation*}
 p(\ro)-\{p(\ro)\}_\Omega = \gamma m^{\gamma-1}r + \frac{1}{2} \bigl( p\carky(\xi) r^2- \{p\carky(\xi) r^2 \}_\Omega \bigr) .
 \end{equation*}
Then  we  combine
 \begin{equation*}
  \gamma m^{\gamma-1}r-2 m \sol\u + \frac{1}{2} \bigl( p\carky(\xi) r^2- \{p\carky(\xi) r^2 \}_\Omega \bigr) = \Pg\Bigl( \mathcal{G} +  m \Delta \PH\u  \Bigr)\label{h31}
 \end{equation*}
 with the continuity equation
  $$ m\sol \u  + \u\cdot  \nabla r +  r\sol\u  = 0  ,$$
 in order to get
 \begin{equation}\label{h32}
  \gamma m^{\gamma-1}r  +2\nabla r \cdot \u   =  -2 r\sol\u+\Pg\Bigl( \mathcal{G} +  m \Delta \PH\u  \Bigr) - \frac{1}{2} \bigl( p\carky(\xi) r^2- \{p\carky(\xi) r^2 \}_\Omega \bigr).
 \end{equation}
Differentiating \eqref{h32}, we obtain
 \begin{multline}
  \gamma m^{\gamma-1}\nabla r  + 2\u\cdot\nabla\nabla r    =  -2 \nabla r\sol\u -2r\nabla\sol\u - 2\nabla \u \nabla r - \frac{1}{2} \nabla \bigl( p\carky(\xi) r^2\bigr)
 \\+\nabla\Pg\Bigl(  \mathcal{G}+ m \Delta \PH\u \Bigr).\label{gradrL}
 \end{multline}
 Note that $\Pg$ is continuous from $L^p$ to $W^{1,p}$, so $\nabla\Pg$ is actually a zero order operator. To obtain from \refx{gradrL} the required information about $\nabla r$, we test the $k$-th component of \refx{gradrL} by $\partial_kr\abs{\partial_kr}^{p-2}.$ The second term on the left hand side can be then rewritten using integration by parts as
 $$\inte{\u\cdot\nabla\partial_kr\abs{\partial_kr}^{p-2}\partial_kr}=-\frac1p\inte{\sol{\u}\abs{\partial_kr}^p};$$
  $ \abs{\nabla \bigl( p\carky(\xi) r^2\bigr)}\leq C m^{\gamma-2}\abs{r} \abs{\nabla r}.$
 Thus, we get due to the Poincar\'e inequality and the fact that $m\gg 1$
 \begin{equation}
  m^{\gamma-1}\lnorm{r}{1,p}\leq C\Bigl(\lnorm{\nabla r}p\lnorm{\nabla \u }{1,p}+ \lnorm{\mathcal{G}}p + m\lnorm{\nabla^2\PH\u}p  \Bigr).\label{h18L}
 \end{equation}
 The first term on the right-hand side can be put to the left-hand side for $\Xi\ll m^{\gamma-1}.$
 Moreover, using \refx{h31}, we  bound the potential part of the velocity. Since
  \begin{equation*}
  2m  \nabla\sol\u = \gamma m^{\gamma-1}\nabla r +  \frac{1}{2} \nabla \bigl( p\carky(\xi) r^2- \{p\carky(\xi) r^2 \}_\Omega \bigr) - \nabla \Pg\Bigl( \mathcal{G} + m \Delta \PH\u
    \Bigr),
  \end{equation*}
  we obtain for the quantity $\nabla\sol\u$ similar estimate, namely
  \begin{equation}
  m\lnorm{\nabla \sol\u}{p}\leq  C \bigl( m^{\gamma-1}
  \lnorm{\nabla r}p + \lnorm{\mathcal{G}}p + m\lnorm{\nabla^2\PH \u}p\bigr).\label{h20L}
  \end{equation}
 Putting together \refx{Stokes}, \refx{h18L} and \refx{h20L} yields 
 $$ \Xi \leq  \frac{C}{m}  \lnorm{\mathcal{G}}p  + C   \lnorm{\nabla\u}p . $$
 By (\ref{h11})
it is easy to see that the most restrictive term is the convective term. 
We  estimate it for $p\in(3,6]$ with interpolation and energy inequality \refx{EI} as follows
 \begin{equation*}
 \begin{split}
 \lnorm{\ro\u\cdot\nabla\u}p\leq \lnorm{\ro}\infty\lnorm\u6\lnorm{\nabla\u}{\frac{6p}{6-p}}\leq &\: \bigl( m+\lnorm r\infty \bigr)\lnorm\u6\lnorm{\nabla\u}2^
 {\frac{6-p}{3p}}\lnorm{\nabla\u}\infty^
 {\frac{4p-6}{3p}}  \\
 \leq &\:C m \lnorm{\ef}{6/5}^{\frac{2p+6}{3p}}\lnorm{\nabla^2\u}p^{\frac{4p-6}{3p}}.
 \end{split}
 \end{equation*} 
 To sum up, we get for $p<6$
 \begin{equation*}
 \Xi\leq C\Big( \lnorm{\ef}p+ {\lnorm{\ef}{6/5}^{\frac{2p+6}{6-p}}}
  \Big).
 \end{equation*}
Thus, under the assumption $\gamma>1,$ we obtain the a priori estimate
\begin{equation}
\lnorm{\nabla\u}{1,p}+m^{\gamma-2}\lnorm{r}{1,p}=\Xi\leq C_\ef.\label{CFL}
\end{equation}

The basic idea is to take $m$ sufficiently larger than the right-hand side of \refx{CFL}, id est 
\begin{equation}C_\ef\ll \min(m^{\gamma-1},m).\label{smallness2}\end{equation}
Finally, we  look back on the continuity equation \refx{RKs3}, and conclude from \refx{CFL} that 
$$\lnorm{\sol\u}p\leq 2\frac{C_\ef^2}{m^{\gamma-1}}.$$
It expresses how far we are from the incompressible flow.
Note that as $\gamma\to 1^+ $, condition \refx{smallness2}  requires larger and larger $m$. 
In particular, $\gamma=1$ would demand small external force.


\section{Approximation}
Let us denote the classes of regularity where  the solutions are searched for
 \begin{align*}
M_r(m)=&\biggl\{ f\in W^{1,p}(\Omega),\int_\Omega{f}\, \dx=0, \, m^{\gamma-2}(\lnorm{f}\infty+\lnorm{\nabla f}p)\leq C_\ef \biggr\},\\
M_\u(m)=&\Bigl\{ \vektor{f}\in W^{2,p}(\Omega,\rntou{3}), \:\vektor{f}\cdot\en={0} 
\text{ on }\partial \Omega, \Bigr.\nonumber\\ \Bigl.&\qqquad\qqquad
\lnorm{\nabla\vektor{f}}2 \leq E, \: \lnorm{\nabla \vektor{f}}\infty+\lnorm{ \vektor{f}}\infty+\lnorm{\nabla^2\vektor{f}}p 
\leq C_\ef,\:  m^{\gamma-1}\lnorm{\sol \vektor{f}}p \leq 2C_\ef^2 \Bigr\},
\end{align*}
where  $C_\ef$ is from \refx{h51}, and $E$ represents the upper bound for the kinetic energy, see \refx{EIf}.
However,  $M_\u(m)$ is not a compact subset of $W^{2,p}(\Omega)$.
Therefore, in order to perform in our last step a Schauder fixed point argument, we o introduce additionally another set, which is a closed subset of $W^{1,\infty}(\Omega)$, $M_\u(m) \subset \Mdiv$, namely
\begin{align*}
\Mdiv=&\Bigl\{ \vektor{f}\in  W^{1,\infty}(\Omega,\rntou{3}), \:\vektor{f}\cdot\en={0} \text{ on }\partial \Omega, \Bigr.\nonumber\\ \Bigl.&\qqquad\qqquad \lnorms{\nabla\vektor{f}}2 \leq E, \: \lnorms{\nabla \vektor{f}}\infty+\lnorm{ \vektor{f}}\infty   \leq C_\ef,\:  m^{\gamma-1}\lnorms{\sol \vektor{f}}p \leq 2C_\ef^2 \Bigr\}.
\end{align*} Our general strategy is as follows. We denote $\rof=m+\rf$.
 First, we fix $\ufi\in \Mdiv$, and $\rf \in M_r(m)$ and use the Leray--Schauder, as well as the Banach fixed point theorem to show the existence of a solution $(r,\u)\in M_r(m)\times M_\u(m) $ to the following system. 
\begin{gather}
 m\sol \u  + \sol( r \u)  = 0, \label{RKgiven} \\ 
\rof\ufi\cdot\nabla\u -\sol\bigl(2\rof\D(\u)\bigr)+ \gamma m^{\gamma-1}\nabla r  + \nabla R_m(\rf)=\rof\ef \text{ in }\Omega,\label{MEgiven}\\
\u\cdot\en=0,\qquad \en\cdot 2 \rof\D(\u)\cdot\boldsymbol{\tau}^k  + f  \u\cdot \boldsymbol{\tau}^k =0  \text{ on }\partial\Omega,\label{BCgiven}
\end{gather}
where, see \refx{TaylorL},
$$
R_m(\rf)=p(m+ \rf)-\gamma m^{\gamma -1}\rf -m^\gamma, \mbox{ \ \ and  \ \ } |R_m(\rf)|\leq Cm^{\gamma-2} \rf^2.
$$
The  uniqueness  for problem  \refx{RKgiven}--\refx{BCgiven} will be a consequence of the construction. Then,  fixing $\ufi\in \Mdiv$, we show via the Banach contraction principle that there exists a solution $(r,\u)\in M_r(m)\times M_\u(m) $ to the system
\begin{gather}
 m\sol \u  + \sol( r \u) = 0, \label{RKgiv} \\ 
(m+r)\,\ufi\cdot\nabla\u -\sol\bigl(2(m+r)\D(\u)\bigr)+ \gamma m^{\gamma-1} \nabla r  + \nabla R_m(r)=(m+r)\ef   \label{MEgiv}
\end{gather}
with boundary conditions \refx{imper}--\refx{slip}.
Finally, we  show the existence of a fixed point of the mapping $\mathcal{T(\ufi)}=\u$ in $M_\u(m)$ by means of the Schauder fixed point theorem.

We start with the following proposition  concerning problem (\ref{RKgiven})--(\ref{BCgiven}).
\begin{prop}\label{Prop1}
Suppose $\ufi\in \Mdiv $, $\uf\in M_\u(m)$, $\rf\in M_r(m)$ for $m$  sufficiently large, then there exists a solution $(r,\u)$ to problem \refx{RKgiven}--\refx{BCgiven} in the class $M_r(m)\times M_\u(m).$
\end{prop}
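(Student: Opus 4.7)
The plan is to decouple the (linear) momentum equation \refx{MEgiven} from the (now linear) transport equation \refx{RKgiven} and iterate on $r$. The key observation is that once $\rf$ and $\ufi$ are fixed, the principal part of \refx{MEgiven} depends only on $\rof=m+\rf$, so that for any given $r$ the momentum equation is a strongly elliptic Lam\'e-type system for $\u$; and once $\u$ is constructed, \refx{RKgiven} rewrites as the linear transport equation $\u\cdot\nabla r + r\sol\u = -m\sol\u$ for $r$. The auxiliary hypothesis $\uf\in M_\u(m)$ will serve as a reference state to initialize the iteration and to provide admissible coefficients.

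For the elliptic subproblem, given $\hat r\in M_r(m)$, I would solve
\begin{equation*}
\rof\ufi\cdot\nabla\u - \sol\bigl(2\rof\D(\u)\bigr) = \rof\ef - \gamma m^{\gamma-1}\nabla\hat r - \nabla R_m(\rf) \text{ in } \Omega
\end{equation*}
under the slip conditions \refx{BCgiven}. Since $\rof\geq m/2$ and the convective term $\rof\ufi\cdot\nabla\u$ is subordinate for $m$ large, the $L^p$-theory for the Lam\'e system with Navier slip (as in \cite{MuJDE,MuPo07}) yields a unique $\u\in W^{2,p}$. Applying the Helmholtz decomposition and repeating the linear versions of \refx{Stokes}--\refx{h20L} confines $\u$ to $M_\u(m)$. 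For the transport subproblem, with $\u\in W^{1,\infty}$, $\u\cdot\en=0$ just obtained, solve $\u\cdot\nabla r + r\sol\u = -m\sol\u$; impermeability ensures characteristics remain in $\Omega$, so standard transport theory in $W^{1,p}$ (Friedrichs regularization by an $\epsilon\Delta r$ term with Neumann data, then $\epsilon\to 0$) produces a unique $r\in W^{1,p}$ with $\int_\Omega r\,\dx=0$, the mean-zero constraint coming from $\int\sol\u\,\dx=0$.

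Composing the two subproblems defines a map $T:\hat r\mapsto r$ on $M_r(m)$. The a priori estimates of Section~2, applied in this linear setting, show $T(M_r(m))\subset M_r(m)$. For the fixed point I would first attempt Banach contraction: subtracting the two iterations for $\hat r_1,\hat r_2$, the difference $\delta\u$ solves the elliptic system with $\gamma m^{\gamma-1}\nabla\delta\hat r$ as forcing, and $\delta r$ then satisfies a transport equation forced by $\delta\u$; the elliptic estimate provides a prefactor $m^{-1}$ and the combined identity \refx{h32} an additional $m^{-(\gamma-1)}$, so one expects $\|\delta r\|_{1,p}\leq C m^{-\alpha}\|\delta\hat r\|_{1,p}$ for some $\alpha>0$ and $m$ large. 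Where contractivity is delicate to verify, Leray--Schauder takes over: continuity of $T$ in the $L^p$ topology combined with the compact embedding $W^{1,p}\hookrightarrow L^\infty$ for $p>3$ and the a priori bounds precluding escape from $M_r(m)$ yield a fixed point via the Schauder theorem.

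The main obstacle is obtaining the $W^{1,p}$ estimate on $r$ at the correct scale: a straightforward energy method on the transport equation gives only $\|\nabla r\|_p\leq C\|\sol\u\|_{1,p}\exp(C\|\nabla\u\|_\infty)$, which is incompatible with the $m^{\gamma-1}$ scaling built into $M_r(m)$. The remedy, as in Section~2, is to substitute the momentum equation back into the differentiated continuity relation to arrive at \refx{h32}--\refx{gradrL}, recovering $\nabla r$ linearly in terms of $\PH\u$, $\mathcal{G}$ and lower-order quantities. Carrying this substitution through the approximation, together with verifying that the curvature-dependent slip boundary data for the $\curl$-system and the overdetermined problem \refx{rotdiv} remain consistent in the iterates, constitutes the technical heart of the argument.
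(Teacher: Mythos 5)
Your decoupling destroys precisely the mechanism that makes the density estimate possible, and this is not a technicality that the substitution trick can rescue. In your scheme the momentum equation is solved for $\u$ with the pressure forcing $\gamma m^{\gamma-1}\nabla\hat r$ ($\hat r$ the input), and only afterwards is the \emph{new} $r$ produced by transport. Taking the potential part of your Lam\'e system gives $\gamma m^{\gamma-1}\hat r - 2m\sol\u = \Pg(\cdots)+\cdots$, so $\sol\u = \tfrac{\gamma}{2}m^{\gamma-2}\hat r + O(1)$, and nothing forces $\sol\u$ to be of order $m^{1-\gamma}$; the constructed $\u$ therefore does not land in $M_\u(m)$. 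Substituting this into your transport equation eliminates $\sol\u$ but leaves
\[
2\bigl(\u\cdot\nabla r + r\sol\u\bigr) = \Pg\bigl(\cdots\bigr) - \gamma m^{\gamma-1}\hat r + \cdots ,
\]
where $\gamma m^{\gamma-1}\hat r$ is a \emph{forcing} of size $\sim mC_\ef$, \emph{not} a zero-order term in the unknown. The transport operator on the left has no large zero-order coefficient to absorb it, so the resulting $r$ is $O(mC_\ef)$, a factor $m^{\gamma-1}$ too large, and the map $\hat r\mapsto r$ does not preserve $M_r(m)$; nor is the raw transport $\u\cdot\nabla r + r\sol\u = -m\sol\u$ (with $\u\cdot\en=0$, closed characteristics) even clearly well-posed without such a term. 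The point of \refx{h32}, \refx{defP}, \refx{STE} is that when $r$ is the \emph{same} unknown in both equations, the combined identity carries $\gamma m^{\gamma-1}r$ on the left — a large zero-order term dominating the small transport coefficient $2\uf/(\gamma m^{\gamma-1})$ — which is what yields $\|r\|_{1,p}\lesssim m^{1-\gamma}\|\cdot\|_{1,p}$.

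What the paper actually does is refuse to decouple at this level: Lemma~\ref{l1} solves the \emph{coupled} Stokes--transport system \refx{rof}--\refx{bf} (constant-coefficient $-m\Delta\u$, transport coefficient $\uf$ fixed, pressure term $\gamma m^{\gamma-1}\nabla r$ with the \emph{same} unknown $r$) as a single linear problem, via the elliptic regularization $-\epsil\Delta r + \epsil r$, the Leray--Schauder theorem, and then the Novotn\'y--Padula decomposition for the $W^{1,p}\times W^{2,p}$ regularity. It is inside this coupled solve that the effective viscous flux \refx{defP} and the stationary transport \refx{STE} legitimately give $r$ at the correct scale. The variable-coefficient viscous part $\sol(2\rf\D(\u))$ and the matching boundary perturbation are handled by a separate outer Banach contraction $\widetilde\u\mapsto\u$ in $W^{1,2}$ (system \refx{RKg}--\refx{BC2}), not by iterating on the density. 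So the loop must run over the viscous term while keeping $r$ coupled; iterating on $r$ with a decoupled elliptic step loses the cancellation and cannot close.
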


\begin{proof}
First, for a given $\vektor{G}\in L^{p}(\Omega)$ and $h\in W^{1-\frac{1}{p},p}(\partial\Omega)$ we study the problem
\begin{align}
 m\sol \u  +  \sol( r \uf)  = &\:  0, \label{rof} \\ 
 -m\Delta\u + \gamma m^{\gamma-1}\nabla r  =&\:   \vektor{G} - \rof\ufi\cdot\nabla{\u} \text{ in }\Omega,  \label{mof}\\
 \u\cdot\en= &\:0, \\ \en\cdot 2 m \D(\u)\cdot\boldsymbol{\tau}^k + f \u\cdot\boldsymbol{\tau}^k  = &\: h\text{ on }\partial\Omega,\quad\int_{\Omega}{r\:}\dx=0.  \label{bf}
\end{align}
\begin{lem}\label{l1}
Given  $\vektor{G}\in L^{p}(\Omega)$ and $h\in W^{1-\frac{1}{p},p}(\partial\Omega)$, there exists a unique solution to system \refx{rof}--\refx{bf} with $r\in W^{1,p}(\Omega)$, $\u\in W^{2,p}(\Omega)$.
\end{lem}
\begin{proof}[Proof of Lemma \ref{l1}] First note that the system is linear. We proceed in the following way. We fix   $\overline{r}\in W^{1,2}(\Omega)$ and use elliptic regularization of 
the continuity equation in order to get merely weak solution to the system with fully linearized continuity equation, then we use the Leray--Schauder argument to obtain a solution to \refx{rof}--\refx{bf}, and finally improve the regularity using the method of decomposition.

For $\epsil>0$ and $\overline{r}\in W^{1,2}(\Omega) $ we consider
\begin{align}
-\epsil \Delta r + \epsil r +   m\sol \u  +  \sol( \overline{r} \uf)  = &\:  0, \label{epsilr} \\ 
 -m\Delta\u + \gamma m^{\gamma-1}\nabla r +  \rof\ufi\cdot\nabla{\u} =&\:   \vektor{G}\text{ in }\Omega, \label{uwithr} \\
 \u\cdot\en= 0,\qquad \en\cdot\nabla r = &\: 0, \\
 \en\cdot 2 m \D(\u)\cdot\boldsymbol{\tau}^k + f \u\cdot\boldsymbol{\tau}^k  = &\: h \text{ on } \partial\Omega.  \label{epsilb}
\end{align}
It is a strictly elliptic  problem, hence the existence of a unique solution follows from the Lax--Milgram theorem; note that $\lnorm{\sol(\rof\ufi)}2\ll m $, so the convective term
is not problematic. Further,  using as test function for \eqref{epsilr} the function $\gamma m^{\gamma-2}r$ and for \eqref{uwithr} the function $\vektor{u}$, we get estimates
\begin{equation*}
\epsil m^{\gamma-2}\norm{r}_{1,2}^2  + m\norm{\u}_{1,2}^2 \leq C(\vektor{G},h,\ufi,\uf,\rf,\norm{\overline{r}}_{1,2})
\end{equation*} 
with $C$ independent of $\epsil$, and from \refx{epsilr} we conclude that actually $r\in W^{2,2}(\Omega)$. Therefore, we see that the mapping $\mathcal{T}:\overline{r}\mapsto r$ defined
through \refx{epsilr}--\refx{epsilb} is a continuous and compact mapping on $W^{1,2}(\Omega)$ for any $\epsil>0$. To apply the Leray--Schauder fixed point theorem, it remains to show that the possible fixed points 
\begin{equation*}
\lam\mathcal{T}(r)=r\label{fixpoint}
\end{equation*} 
are bounded in $W^{1,2}(\Omega)$ independently of $\lam\in[0,1].$ Relation \refx{fixpoint} is in fact nothing but
\begin{align}
-\epsil \Delta r + \epsil r + \lam  m\sol \u  +  \lam \sol( {r} \uf)  = &\:  0, \label{epsr} \\ 
 -\lam m\Delta\u + \gamma m^{\gamma-1}\nabla r + \lam \rof\ufi\cdot\nabla{\u} =&\:  \lam \vektor{G}\text{ in }  \Omega,  \\
 \u\cdot\en= 0,\qquad \en\cdot\nabla r = &\: 0, \\
 \en\cdot 2 m \D(\u)\cdot\boldsymbol{\tau}^k + f \u\cdot\boldsymbol{\tau}^k  = &\: h \text{ on } \partial\Omega.  \label{epsb}
\end{align}
We test the second equation by $\lam\u$ and the first one by $\gamma m^{\gamma-2}r$ concluding
\begin{multline}
 \lam^2 m \norm{\u}_{1,2}^2 + \epsil m^{\gamma-2}\gamma  \lnorm{\nabla r}2^2+ \epsil m^{\gamma-2}\gamma  \lnorm{  r}2^2  \\ \leq 
 \lam^2(\lnorm{\vektor{G}}{6/5}\lnorm{\u}6 +\|h\|_{W^{1/2,2}(\partial \Omega)}\|u\|_{L_2(\partial\Omega)}) + \lam m^{\gamma-2}\gamma \lnorm{\sol\uf}\infty\lnorm{r}2^2 . \label{e56}  
\end{multline}
In order to close the estimates,  the last term is estimated by  the Bogovskii operator.  This reads after using Young's inequality
\begin{equation}\label{e56a}
\gamma m^{\gamma-1}\lnorm{r}2^2  \leq C \lam^2 \bigl( m^{3-\gamma} \lnorm{\nabla\u}2^2  +   m^{3-\gamma}  \lnorm{\ufi}3^2 \lnorm{\nabla\u}2^2 +   m^{1-\gamma}\lnorm{\vektor{G}}{6/5}^2  \bigr).
\end{equation}
Incorporating this into \refx{e56}, we obtain
\begin{multline*}
 \lam^2 m \norm{\u}_{1,2}^2 + \epsil m^{\gamma-2} \lnorm{\nabla r}2^2+ \epsil m^{\gamma-2} \lnorm{  r}2^2  \\ 
 \leq C \Biggl( \frac{\lam^2}{m}(\lnorm{\vektor{G}}{6/5}^2+\|h\|_{W^{1/2,2}(\partial \Omega)}) + \frac{\lam^3}m\lnorm{\sol\uf}\infty \Bigl(   \frac{\lnorm{\nabla\u}2^2}{m^{\gamma-3}} (1+E^2)  +   \frac{\lnorm{\vektor{G}}{6/5}^2}{m^{\gamma-1}}  \Bigr)  \Biggr)
\end{multline*}
and consequently, since $E^2\lnorm{\nabla^2\uf}p\ll m^{\gamma-1}$,
\begin{equation}
 \epsil m^{\gamma-2} \lnorm{\nabla r}2^2+ \epsil m^{\gamma-2} \lnorm{  r}2^2 \leq C\bigl( \lnorm{\vektor{G}}{6/5}, \|h\|_{W^{1/2,2}(\partial \Omega)}\bigr), \label{e57}
\end{equation}
where $C$ is independent of $\epsil$ and $\lam.$
Thus, we get for given $\epsil>0$ a fixed point of $\mathcal{T}$, which satisfies estimate \refx{e57}, and then (\ref{e56a})  with $\lam=1,$ so we  pass to the limit with $\epsil\to0^+$ to get a weak solution to~\refx{rof}--\refx{bf}.

\smallskip 

  To improve the regularity of the  solution we use the method of decomposition of Novotn\'{y} and Padula \cite{NoPa94}. 
  Here we use a bootstrap method, first we perform below estimates for $p=2$, having the weak solution, and then we are allowed to do it for $p>3$.
  First, we deduce by applying $\curl$ on \refx{mof} that $\omeg$ fulfills
   \begin{align*}
   - m  \Delta \omeg =&\:\curl\bigl( - \rof\ufi\cdot\nabla\u 
    + \vektor{G}  \bigr)\text{ in }\Omega ,\\
              \sol\omeg= &\:0,\\
     \omeg\cdot\boldsymbol{\tau}^1 =&\:-  \Bigl(2\chi_2-\frac{f}m\Bigr) {\u}\cdot\boldsymbol{\tau}^2-\frac{h}m,\\ 
         \omeg\cdot\boldsymbol{\tau}^2 = &\:  \Bigl(2\chi_1-\frac{f}m\Bigr) {\u}\cdot\boldsymbol{\tau}^1+\frac{h}m\text{ on }\partial\Omega.
   \end{align*} 
  And it satisfies the following bound
  \begin{equation*}
   m \norm{\omeg}_{{1,p}} \leq  C\Bigl(  {\norm{\curl( - \rof\ufi\cdot\nabla\u 
        + \vektor{G}  )}_{(W^{1,p{\sp{\prime}}}(\Omega))^*} }
  +m\norm{ \u}_{W^{1-\frac{1}{p},p}(\partial\Omega)}+\norm{h}_{W^{1-\frac{1}{p},p}(\partial\Omega)}\Bigr).
  \end{equation*}
  As  $\PH\u$ satisfies \refx{rotdiv}, we get that
   \begin{equation*}
     m \norm{\nabla^2\PH\u}_{{p}} \leq  C\Bigl( m\norm{ \nabla\u }_{{p}}
          + \norm{\vektor{G}  }_{{p}} \Bigr. \\\Bigl. 
    +m\norm{ \u}_{W^{1-\frac{1}{p},p}(\partial\Omega)}+\norm{h}_{W^{1-\frac{1}{p},p}(\partial\Omega)}\Bigr)  .
    \end{equation*}
   Further, using the well-known vector identity $\Delta\u = \nabla\sol\u - \curl(\curl\u)$, we observe that the linearized effective viscous flux \begin{equation}
   P=\gamma m^{\gamma-2}r - 2\sol\u\label{defP}
   \end{equation}  solves
\begin{equation*}
 m \nabla P =   \vektor{G}  - \rof\ufi\cdot\nabla{ \u} -m\curl\omeg,\qquad \int_\Omega P \dx =0,\label{Pevf}
\end{equation*}
 with the estimate 
 \begin{equation*}
 m  \norm{P}_{1,p}\leq C \bigl(\lnorm{\vektor{G}}{p}  +m\lnorm{\nabla {\u}}{p}+m\lnorm{\curl\omeg}{p} \bigr).
 \end{equation*}
  Next, combining  the continuity equation \refx{rof} together with relation~\refx{defP},
we observe that  the variation of the density $r$ actually satisfies the stationary transport equation
\begin{equation}
r+\sol\Bigl(  \frac{2r \uf}{\gamma m^{\gamma-1}} \Bigr) = \frac{P}{\gamma m^{\gamma-2}}\text{ in }\Omega,\qquad \inte{r}=0. \label{STE}
\end{equation}  
  Noting that   \begin{equation}
   \frac{\norm{\uf}_{2,p}}{m^{\gamma-1}}\leq \alpha\label{fortrans}
   \end{equation}
   for some $\alpha$ sufficiently small and $ \uf\cdot\en =0 \text{ on }\partial\Omega$, we can deduce that  the unique solution $r$ of  problem \refx{STE} satisfies $$m^{\gamma-2}\norm{r}_{W^{1,p}(\Omega)}\leq C \norm{P}_{W^{1,p}(\Omega)},$$
see~\cite[Theorem 5.1]{No_CMUC}.
   
    Finally, the definition of Helmholtz decomposition yields that actually $\sol\u = \Delta\Pg\u$,  hence according to~\refx{defP} the potential part of the velocity field $\Pg\u$ satisfies the Neumann problem
  \begin{gather*} 
 -2\Delta \Pg\u = P-\gamma m^{\gamma-2} r\text{ in }\Omega,\\
  \nabla\Pg\u\cdot\en= \:0  \text{ on }\partial\Omega,
   \end{gather*} 
providing by the standard elliptic theory the estimate $$m\lnorm{\nabla\Pg\u}{2,p}\leq Cm \lnorm{P-\gamma m^{\gamma-2} r}{1,p}.$$ Therefore, summing up the estimates above, we get that solution to \refx{rof}--\refx{bf} fulfils
\begin{equation*}
m\lnorm{  \u}{W^{2,p}(\Omega)} + m^{\gamma-1}\norm{r}_{W^{1,p}(\Omega)} \leq C   \Bigl( m\norm{ \nabla\u }_{L^p(\Omega)}  +m\norm{ \u}_{W^{1-\frac{1}{p},p}(\partial\Omega)}
   + \norm{\vektor{G}  }_{L^p(\Omega)}   +\norm{h}_{W^{1-\frac{1}{p},p}(\partial\Omega)}\Bigr).
\end{equation*}
 The first two terms can be  put to the left-hand side by means of  interpolation with the energy norm, while the rest is controlled, so we see that the solution has the proposed regularity. This completes the proof of this lemma.
\end{proof}
In order to finish the proof of Proposition \ref{Prop1} we  find a fixed point of the mapping $\widetilde{\u}\mapsto\u$ defined through\footnote{Let us recall the notation $\rof=m+\rf.$}
\begin{align}
 m\sol \u  +  \sol( r \uf)  = &\:  0, \label{RKg} \\ 
 -\sol\bigl(2m\D(\u)\bigr)+ \gamma m^{\gamma-1}\nabla r  =&\:   \sol\bigl(2\rf \D(\widetilde{\u})\bigr) + \nabla R_m(\rf)+  \rof\ef - \rof\ufi\cdot\nabla {\u} \text{ in }\Omega,  \label{MEg}\\
 \u\cdot\en= &\:0, \label{BC1}\\
   \en\cdot 2 m \D(\u)\cdot\boldsymbol{\tau}^k+ f \u\cdot\boldsymbol{\tau}^k  = &\: - \en\cdot 2 \rf \D(\widetilde \u)\cdot\boldsymbol{\tau}^k \text{ on }\partial\Omega,\quad\int_{\Omega}{r\:}\dx=0. \label{BC2}
\end{align}
The mapping is according to the previous lemma well-defined from $W^{2,p}(\Omega)$ to $W^{2,p}(\Omega).$ 
We want to show that in fact it maps $M_\u(m)$ into itself and that it is a contraction. For this purpose, we test the first equation with $\gamma m^{\gamma-2}r$, the second equation with $\u$, and sum up the resulting relations. We end up with
\begin{multline*}
\inte{ 2m\abs{\D(\u)}^2 } + \sum\limits_{k=1,2}\inth{f |{ \u\cdot\boldsymbol{\tau}^k|}^2}=   \inte{ 2\rf\D(\widetilde\u):\D(\u) }\\+
 \inte{\Bigl(-  \rof\ufi \cdot\nabla \frac{\abs{\u}^2}{2}- \frac{ \gamma m^{\gamma-2}}2 r^2\sol\uf + R_m(\rf)\sol\u+\rof\ef\cdot\u\Bigr)}.
\end{multline*}
The first term on the right-hand side can be using the H\"older inequality controlled by the left-hand side, while the convective term can be estimated 
\begin{multline} \abs{\inte{\sol\bigl((m+\rf) \ufi\bigr) \frac{\abs{\u}^2}{2} }}\leq  m \inte{ \abs{\sol\ufi} {\abs{\u}^2} } +\inte{ \abs{\nabla\rf}\abs{\ufi}  {\abs{\u}^2}}  \\
\leq   C  \norm{\u}_{1,2}^2 \Bigl(  m\norm{ \sol\ufi }_p   +  \norm{\nabla \rf}_p\lnorm{\ufi}3  \Bigr) . \label{trik}
\end{multline}
The second last term on the right-hand
 side can be estimated by Young's inequality 
$$
\Big|\int_\Omega R_m(\rf) \sol \vektor{u}\, \dx\Big| \leq \frac m2 \|\sol \vektor{u}\|_2^2 + Cm^{2(\gamma-2)-1} \|\rf\|_\infty^2.
$$
Thus, using the assumptions on $\rf,\ufi$, especially $C_\ef^2\ll \min( m,m^{\gamma-1})$
\begin{equation*}
m\lnorms{\nabla\u}2^2\leq C \Bigl( m^{\gamma-2}\norm{r}_{2}^2\lnorms{\sol\uf}{\infty} +m\lnorms{\ef}{6/5} ^2 +1\Bigr). \label{solu}
\end{equation*}
In order to obtain the $L^2$-estimate of the density, we  test the momentum equation with $-\Phib$, $\Phib = \mathcal{B}\left[r\right]$, so $\lnorm{\nabla\Phib}2\leq C\lnorm{r}2 .$ This leads to
\begin{multline*}
\gamma m^{\gamma-1}\lnorm{r}2^2 \leq  m^{\gamma-2}\lnorm{\rf}\infty\lnorm{\rf}2 \lnorm{r}2 + 2m\lnorm{\nabla\u}2 \lnorm{\nabla\Phib}2 +2   \lnorm{\rf}\infty \lnorm{\nabla\widetilde\u}2 \lnorm{\nabla\Phib}2 \\
\qqquad\qqquad \qqquad+\inte{\Bigl( (m+\rf)\ufi\cdot\nabla\u\cdot\Phib - (m+\rf)\ef\cdot\Phib\Bigr)} \\
\leq  C\Bigl((m^{\gamma-2}\lnorm{\rf}2+\lnorm{\nabla\widetilde\u}2)\lnorm{\rf}\infty + m\lnorm{\nabla\u}2 +m\lnorm{\ufi}{3}\lnorm{\nabla\u}2 +m\lnorm{\ef}{6/5} \Bigr)\lnorm{r}2,
\end{multline*} 
id est
\begin{align*}
m^{\gamma-1}\lnorm{r}2^2 \leq  C\Bigl( m^{\gamma-3}  {\lnorm{\rf}\infty^2\lnorm{\rf}2^2}   + (1+E^2)\bigl( m^{3-\gamma}\lnorm{\ef}{6/5} ^2+1 +\lnorm{r}{2}^2\lnorm{\sol\uf}{\infty}  \bigr) \Bigr).
\end{align*}
Assuming $m^{\gamma-1}\gg E^2 C_\ef$, the last term can be put to the left-hand side, hence going back to \refx{solu}, we obtain	that
	\begin{equation}
	 \lnorms{\nabla\u}2^2
	 \leq C \Biggl(\biggl(m^{3(1-\gamma)}  +(1+E^2)\Bigl(\frac{\lnorm{\ef}{{6}/{5}} ^2}{m^{\gamma-1}}+\frac{1}{m^{2}}\Bigr)\biggr) \lnorm{\sol\uf}{\infty}  +\lnorm{\ef}{{6}/{5}} ^2+m^{-1} \Biggr)\leq E^2. \label{choiceE} 
		\end{equation}	
The last inequality is satisfied for properly chosen $E$ and sufficiently large $m,$ which will be chosen later, keeping in mind constraint \refx{choiceE}. Thus we have
	\begin{equation}	
	\lnorm{\nabla\u}2\leq E.\label{EIf}
	\end{equation}	

Next, we  show that $\Xi\leq C_\ef$ for $(r,\u)$ . 
Introduce
$$\Gef=- \rof\ufi\cdot\nabla\u 
 + 2  \div (\rf \D(\widetilde\u)) +\rof\ef, $$ where $\rof=\rf+m.$  First, applying $\curl$ on \refx{MEg} yields
\begin{align*}
-m\Delta\omeg  =&\: \curl\Gef\text{ in }\Omega,\\
    m \omeg\cdot\boldsymbol{\tau}^1 =&\: -   \rf \widetilde{\omeg}\cdot\boldsymbol{\tau}^1-  \Bigl(2m\chi_2-{f}  \Bigr) {\u}\cdot\boldsymbol{\tau}^2- 2\rf\chi_2\widetilde{\u}\cdot\boldsymbol{\tau}^2,\\ 
      m   \omeg\cdot\boldsymbol{\tau}^2 = &\: -\rf \widetilde{\omeg}\cdot\boldsymbol{\tau}^2+ \Bigl(2m\chi_1-{f}  \Bigr) {\u}\cdot\boldsymbol{\tau}^1+2\rf\chi_1\widetilde{\u}\cdot\boldsymbol{\tau}^1,\\
          \sol\omeg= &\:0\text{ on }\partial\Omega,
          \end{align*}
and since $\PH\u$ satisfies \refx{rotdiv}, we conclude
\begin{equation}
m\lnorm{\nabla^2\PH\u}{p} \leq  C \Bigl( \lnorm{\rf\widetilde\omeg}{1,p} + \bigl\|{\Gef}\bigr\|_{p} +  m\lnorm{\u}{1,p}+\lnorm{\rf\widetilde\u}{1,p}    \Bigr) . \label{Stokf}
\end{equation} 
Similarly, the potential part of the momentum equation \refx{MEg} reads
\begin{equation*}
\gamma m^{\gamma-1} r  + R_m(\rf) - \{ R_m(\rf)\}_{\Omega} -2 m \sol\u = \Pg\bigl(  m \Delta \PH\u+\Gef \bigr)\label{e82}
\end{equation*}
which combined with the continuity equation
 $$ m\sol \u  + \sol(r \uf )  = 0  $$
 yields 
\begin{equation*}
\gamma m^{\gamma-1} r  + R_m(\rf) - \{ R_m(\rf)\}_{\Omega} +  2 \nabla r \cdot \uf   =  -2  r\sol\uf+\Pg\bigl(   m \Delta \PH\u
+\Gef \bigr).
\end{equation*}
After differentiating,
\begin{equation}
\gamma m^{\gamma-1}\nabla r  + 2\uf \cdot\nabla\nabla r    =  - 2\nabla r\sol\uf -2 r\nabla\sol\uf - 2 \nabla \uf \nabla r - \nabla R_m(\rf)
+\nabla\Pg\bigl(    m \Delta \PH\u
+\Gef \bigr).\label{gradf}
\end{equation}
Using the same trick as in the a priori estimates part,
$$\inte{\uf\cdot\nabla\partial_kr\abs{\partial_kr}^{p-2}\partial_kr}=-\frac{1}{p}\inte{\sol{\uf}\abs{\partial_kr}^p},$$
we obtain
\begin{multline*}
  m^{\gamma-1}\lnorm{\nabla r}{p}\leq C\Bigl( \lnorm{\nabla \uf}{\infty}\lnorm{\nabla r}p  +  \lnorm{r}\infty\lnorms{\nabla \sol \uf}p +\lnorm{\nabla r}p\lnorms{ \sol \uf}\infty\Bigr.\\\Bigl.
  + \lnorm{  \nabla R_m(\rf)}p +
  \lnorms{\Gef}p + m  \lnorms{\nabla^2\PH\u}p\Bigr),
\end{multline*}
hence since $m^{\gamma-1}\gg\Xi$, $\int_\Omega r\, \dx =0$, 
\begin{equation*}
\lnorm{\nabla r}{p}\leq  \frac C{m^{\gamma-1}}\Bigl(  \lnorm{  \nabla R_m(\rf)}p+  \lnorms{\Gef}p +  m  \lnorms{\nabla^2\PH\u}p\Bigr).
\end{equation*}
Moreover, using \refx{e82}, we bound the potential part of the velocity. As
\begin{equation*}
2m \nabla\sol\u =  \gamma m^{\gamma-1} \nabla r +\nabla R_m(\rf) - \nabla \Pg\bigl(   m \Delta \PH\u
 + \Gef\bigr),
\end{equation*}
due to the fact that $\|\nabla^2 P_\nabla \u\|_{1,p} \leq C \|\sol \u\|_{1,p} \leq \widetilde{C} \|\nabla \sol \u\|_p$, we obtain
\begin{equation*}
\Xi \leq  \frac C{m}
  \Bigl( \lnorm{\nabla R_m(\rf)}p+ \lnorm{\rf\widetilde\omeg}{1,p} + \bigl\|{\Gef}\bigr\|_{p} +  m\lnorm{\u}{1,p}+\lnorm{\rf\widetilde\u}{1,p}    \Bigr). 
\end{equation*}
According to  $C_\ef^2\ll m$, the only problematic term in $\Gef$ is again the convective term. At this point we use  that $\ufi$ satisfies the energy inequality, so
\begin{equation*}
\lnorms{\rof\ufi\cdot\nabla\u}p\leq \: \lnorm{\rof}\infty\lnorm\ufi6\lnorms{\nabla\u}{\frac{6p}{6-p}}  \leq  \: \bigl( m+\lnorm\rf\infty \bigr)\lnorm\ufi6\lnorms{\nabla\u}2^
{\frac{6-p}{3p}}\lnorms{\nabla\u}\infty^
{\frac{4p-6}{3p}}
\leq C m E^{\frac{2p+6}{3p}}\Xi^{\frac{4p-6}{3p}}.
\end{equation*}
Thus,
\begin{equation*}
\Xi\leq  C \Bigl(1+\lnorms{\ef}p + E\Xi^{\frac{4p-6}{3p}}  \lnorms{\ef}{6/5}^{\frac{2p+6}{3p}}\Bigr).\label{h50}
\end{equation*}
As $\frac{4p-6}{3p}<1$ for $p<6$,  we conclude finally
\begin{equation}
 m^{\gamma-2}\bigl(\norm{r}_{1,p}+ \norm{r}_{\infty} \bigr)+ \norm{\u}_{2,p} + \norm{\nabla\u}_\infty + \norm{ \u}_\infty 
 \leq  C \Bigl(1+\norm{\ef}_p + \norm{\ef}_{6/5}^{\frac{2p+6}{6-p}} E^{\frac{3p}{6-p}}\Bigr),\label{h51}
\end{equation}
where $C$ is an absolute constant independent of the solution, provided $\Xi\ll m$. It is sufficient to set $m$ to be appropriately greater than the right-hand side of \refx{h51} --- let us denote it by $C_\ef$. Having
in mind that restriction \refx{choiceE} has to be fulfilled, we take
\begin{equation}
\frac{\min(m,m^{\frac{\gamma-1}4})}{{\alpha}^{-1}+15}> \max\bigl(C_\ef, C_\ef^2 , C_\ef E^2, C_\ef^2 E^2, C_1, C_2\bigr)\cdot \max(C_P , C_K, C_E, C_B) ,\label{smallness}
\end{equation}  
where $C_1$ is from \refx{contr2}, $C_2$ from \refx{contraction}, $\alpha$ represents the smallness constant in \refx{fortrans}, and $C_P$,  $C_K$ and  $C_E$ denotes the constant from the Poincar\'{e}, Korn embedding ($W^{1,p}\hookrightarrow L^{\infty}$) inequality, respectively. The symbol $C_B$ stands for the constant induced by the Bogovskii operator.
 Looking back to the continuity equation, we  conclude from \refx{h51} that $ \lnorm{\sol\u}p \leq  2\frac{C_\ef^2}{m^{\gamma-1}}. $

 \smallskip 
 
Now let us prove that mapping $\widetilde{\u}\mapsto\u$ is in fact a contraction.  Indeed, 
differences of two solutions $\up=\u_1-\u_2$, $\rp=r_1-r_2$ corresponding to $\widetilde{\up}=\widetilde\u_1-\widetilde\u_2$ satisfy
 \begin{gather}
  m\sol \up  +  \sol( \rp \uf_1  ) + \sol (r_2 \widetilde\up)  = 0,  \\ 
 \rof\,\ufi\cdot\nabla\up -2m\sol\bigl(\D(\up)\bigr)-\sol\bigl(2\rf \D(\widetilde{\up})\bigr)+ \gamma m^{\gamma-1}\nabla \rp  =\vektor{0}  \text{ in }\Omega, \label{ME}  \\
     \up\cdot\en = \: 0,\quad
     \en\cdot 2m\D(\up)\cdot\boldsymbol{\tau}^k + f \up\cdot\boldsymbol{\tau}^k=-\en\cdot 2\rf\D(\widetilde\up)\cdot\boldsymbol{\tau}^k  \text{ on }\partial\Omega.
 \end{gather}
 Basic energy estimate reads
\begin{multline*}
\inte{ 2m\abs{\D(\up)}^2 } + \sum\limits_{k=1}^{2} \inth{f \bigl|{\up\cdot \boldsymbol{\tau}^k }\bigr|^2 }=  \inte{  2\rf\D(\widetilde\up):\D(\up) } \\
=  \inte{\Bigl( -  \rof\ufi \cdot\nabla \frac{\abs{\up}^2}{2}- \frac{ \gamma m^{\gamma-2}}2 \rp^2\sol\uf_1+  \sol (r_2 \widetilde\up) \gamma m^{\gamma-2} \rp \Bigr)}.
\end{multline*}
Further, using again \refx{trik}, we obtain
\begin{equation}
m\lnorm{\nabla\up}2^2\leq C \Bigl( m^{\gamma-2} \lnorm{\rp}{2}^2\lnorm{\sol\uf_1}{\infty} + \frac{\lnorm{\rf}{\infty}^2}{m} \lnorms{\nabla\widetilde\up}2^2 + m^{\gamma-2} \lnorms{  \sol (r_2 \widetilde\up)  }2\lnorm{R}2 \Bigr)  . \label{e94} 
\end{equation}
Estimating the density using the Bogovskii operator leads to
\begin{align*}
\gamma m^{\gamma-1}\lnorm{\rp}2^2 \leq &  \: 2m\lnorm{\nabla\up}2\lnorm{\nabla\Phib}2  +    \lnorm{\rf}\infty\lnorms{\nabla\widetilde\up}2\lnorm{\nabla\Phib}2 
+\inte{\Bigl(  (m+\rf)\ufi\cdot\nabla\up\cdot\Phib \Bigr)} \\
\leq &\: C\Bigl( m\lnorms{\nabla\up}2 + \lnorm{\rf}\infty\lnorms{\nabla\widetilde\up}2 +m\lnorm{\ufi}{3}\lnorms{\nabla\up}2  \Bigr)\lnorm{\rp}2,
\end{align*} 
hence by Young's inequality and \refx{e94}
\begin{multline*}
m^{\gamma-1}\lnorm{\rp}2^2 \leq  C \Bigl( m^{3-\gamma}\lnorm{\nabla\up}2^2 + \frac{\lnorm{\rf}\infty^2\lnorms{\nabla\widetilde\up}2^2}{m^{\gamma-1}} +m^{3-\gamma} E^2\lnorm{\nabla\up}2^2  \Bigr)\\
\leq C (1+E^2)\Bigl( \lnorm{\rp}{2}^2\lnorm{\sol\uf_1}{\infty}+\lnorms{  \sol (r_2 \widetilde\up) }2 \lnorm{\rp}2 \Bigr) + C \frac{\lnorm{\rf}{\infty}^2}{m^{\gamma-1}} \lnorms{\nabla\widetilde\up}2^2  .
\end{multline*} 
As $\lnorm{\sol\uf_1}\infty\ll m^{\gamma-1} $, the first term can be put to the left-hand side, so we get again by Young's inequality
\begin{align*}
m^{\gamma-1}\lnorm{\rp}2^2 \leq C \biggl( (1+E^4)\frac{\lnorms{  \sol (r_2 \widetilde\up) }2^2}{m^{\gamma-1}}  + \frac{\lnorm{\rf}{\infty}^2}{m^{\gamma-1}} \lnorms{\nabla\widetilde\up}2^2 \biggr) .
\end{align*} 
Since $\lnorms{  \sol (r_2 \widetilde\up) }2\leq \lnorm{ r_2 }\infty \lnorms{\nabla\widetilde{\up}} 2  +   \lnorm{ \nabla r_2 }3 \lnorms{ \widetilde{\up}} 6 , $  we conclude
\begin{align*}
m^{\gamma-1}\lnorm{\rp}2 \leq C  ( 1+ E^2)  C_\ef \lnorms{\nabla\widetilde\up}2  .
\end{align*} 
Going back to \refx{e94} we obtain almost final form of the desired estimate
\begin{equation*}
m\lnorm{\nabla\up}2^2\leq C \bigl(1+E^2\bigr) \Bigl( \frac{C_\ef^2}{m^\gamma} \lnorms{\nabla\widetilde\up}2^2\lnorm{\sol\uf_1}{\infty} + \frac{C_\ef^2}{m} \lnorms{\nabla\widetilde\up}2^2 \Bigr) . \label{contr} 
\end{equation*}
Therefore, for $m$ sufficiently large, we  write for some $C_1$ which is independent of $m$,
\begin{equation}
\lnorms{\nabla\up}2\leq \frac{C_1}{m} \lnorms{\nabla \widetilde{\up}}2.\label{contr2}
\end{equation}
Taking $m>C_1$, we obtain that the mapping is contraction in the $W^{1,2}$-metric. Thus, using  the boundedness in $M_\u(m)\subset W^{2,p}(\Omega)$ as well, Proposition \ref{Prop1} is proved.\footnote{More precisely, we use the following simple consequence of standard Banach contraction argument and  weak compactness of reflexive spaces.
\begin{thm*} 
 Let $X$, $Y$ be Banach spaces such that $X$ is reflexive and continuously embedded into $Y$ ($X \hookrightarrow Y$), let $K\subset X$ be
 a non-empty,  convex, bounded subset of $X$. Suppose further that $\mathcal{T}\colon K \to K$  is a contraction mapping  in $Y$-metric, id est
  $$  \norm{\mathcal{T}(u)-\mathcal{T}(v)}_Y\leq \kappa \norm{u-v}_Y ,\quad\forall u,v \in K,$$
  for some $0\leq \kappa<1.$ Then $\mathcal{T}$ possesses a unique fixed point in $K$.
 \end{thm*}.}
\end{proof}

\subsubsection*{Elimination of the density linearization}
\begin{prop}\label{Prop2}
Suppose $\ufi\in M_\u(m)$ for $m$  sufficiently large, then there exists a unique solution $(r,\u)$ to problem \refx{RKgiv}--\refx{MEgiv} with boundary conditions \eqref{imper}--\eqref{slip} in the class $M_r(m)\times M_\u(m)$.
\end{prop}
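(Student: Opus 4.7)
The strategy is to eliminate the auxiliary quantities $\rf$ and $\uf$ from Proposition \ref{Prop1} simultaneously by a Banach contraction argument. For the fixed $\ufi\in M_\u(m)$ I define the mapping
$$\mathcal{S}\colon M_r(m)\times M_\u(m)\to M_r(m)\times M_\u(m),\qquad \mathcal{S}(\rf,\uf)=(r,\u),$$
where $(r,\u)$ is the unique solution of the linearized system \refx{RKgiven}--\refx{BCgiven} furnished by Proposition \ref{Prop1}. Any fixed point $(\rf,\uf)=(r,\u)$ of $\mathcal{S}$ is precisely a solution of \refx{RKgiv}--\refx{MEgiv} with boundary conditions \refx{imper}--\refx{slip} lying in $M_r(m)\times M_\u(m)$. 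Proposition \ref{Prop1} already guarantees that $\mathcal{S}$ is well-defined and self-mapping, and uniqueness within the class will be inherited from the contraction property. It thus suffices to verify that $\mathcal{S}$ is a strict contraction in an appropriate weaker norm.

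Given two inputs $(\rf_i,\uf_i)\in M_r(m)\times M_\u(m)$, $i=1,2$, with images $(r_i,\u_i)$, set $\rp=r_1-r_2$, $\up=\u_1-\u_2$, $\widetilde\rp=\rf_1-\rf_2$, $\widetilde\up=\uf_1-\uf_2$. Subtracting the two copies of \refx{RKgiven}--\refx{BCgiven} produces a linear problem for $(\rp,\up)$ whose right-hand side depends on $(\widetilde\rp,\widetilde\up)$ through the differences of the mass factor $\rof=m+\rf$, of the pressure perturbation $R_m(\rf)$ and of the transport coefficient $\uf$; each such contribution carries at least one factor of $\widetilde\rp$ or $\widetilde\up$, weighted by quantities bounded in $M_r(m)$, $M_\u(m)$ or by $C_\ef$. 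Following the template of the contraction calculation \refx{e94}--\refx{contr2} in Proposition \ref{Prop1}, I will test the resulting momentum equation by $\up$ and independently by the Bogovskii corrector $-\mathcal{B}[\rp]$. Korn's inequality, the convective-term trick \refx{trik}, the elementary bound
$$\bigl|R_m(\rf_1)-R_m(\rf_2)\bigr|\leq Cm^{\gamma-2}\bigl(\lnorm{\rf_1}\infty+\lnorm{\rf_2}\infty\bigr)|\widetilde\rp|,$$
the identity $\sol(r_2\widetilde\up)=\nabla r_2\cdot\widetilde\up+r_2\sol\widetilde\up$, Young's inequality, and absorption of the term $\lnorm{\sol\uf_1}\infty\lnorm{\rp}2^2\ll m^{\gamma-1}\lnorm{\rp}2^2$ to the left-hand side should then deliver an estimate of the shape
$$\lnorm{\nabla\up}2+m^{(\gamma-1)/2}\lnorm{\rp}2 \leq \frac{C_2}{m}\Bigl(\lnorm{\nabla\widetilde\up}2+m^{(\gamma-1)/2}\lnorm{\widetilde\rp}2\Bigr),$$
with a constant $C_2$ depending only on $C_\ef$ and $E$.

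Thanks to the smallness condition \refx{smallness} the ratio $C_2/m<1$, so $\mathcal{S}$ is a strict contraction on $M_r(m)\times M_\u(m)$ equipped with the weak metric $\norm{(\rho,\vektor{w})}=\lnorm{\nabla\vektor{w}}2+m^{(\gamma-1)/2}\lnorm{\rho}2$. Since $M_r(m)\times M_\u(m)$ is a non-empty, bounded, convex subset of the reflexive space $W^{1,p}(\Omega)\times W^{2,p}(\Omega)$ continuously embedded into this metric, the abstract fixed point principle stated in the footnote to Proposition \ref{Prop1} produces a unique fixed point lying in $M_r(m)\times M_\u(m)$, which proves Proposition \ref{Prop2}. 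The main obstacle is the careful accounting of the $m$-dependent weights in the difference calculation, so that every contribution of $(\widetilde\rp,\widetilde\up)$ on the right-hand side of the energy and Bogovskii estimates is damped by at least a factor $C_\ef/m$; the constraint \refx{smallness} is arranged precisely to enforce this damping.
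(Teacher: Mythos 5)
Your argument is a correct variant of the paper's, but it distributes the fixed-point work slightly differently. You contract simultaneously on the pair $(\rf,\uf)$ — exactly the two auxiliary data appearing in Proposition~\ref{Prop1} as stated — whereas the paper runs the Banach iteration $\mathcal{S}_\ufi(r_n)=r_{n+1}$ on the density alone, with the continuity equation in \refx{MEn} written self-consistently as $m\sol \u_{n+1}+\sol(r_{n+1}\u_{n+1})=0$; i.e.\ the paper treats the velocity linearization as already absorbed by Proposition~\ref{Prop1}, so only $\rf$ remains to eliminate. The price of the paper's leaner iteration is that the contraction estimate lives on $\lnorm{r}2$ alone (cf.~\refx{contraction}) and the velocity difference $\u_{n+1}-\u_n$ is recovered as a controlled by-product; the price of your joint iteration is that you must also damp the $\widetilde\up$-contribution $\sol(r_2\widetilde\up)$ entering the continuity difference, which is why your weak metric has to carry both $\lnorm{\nabla\up}2$ and the weighted density term. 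Both routes close under the smallness hypothesis \refx{smallness}.

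A few quantitative remarks. Your bound $\abs{R_m(\rf_1)-R_m(\rf_2)}\leq Cm^{\gamma-2}(\lnorm{\rf_1}\infty+\lnorm{\rf_2}\infty)\abs{\widetilde\rp}$ is correct and mirrors the $m^{2\gamma-5}$ term in \refx{e101}. However, the uniform damping factor $C_2/m$ you announce for the combined metric $\lnorm{\nabla\up}2+m^{(\gamma-1)/2}\lnorm{\rp}2$ is optimistic: the contribution of $\sol(r_2\widetilde\up)$ to the Bogovskii estimate for $\rp$ is of size $C_\ef\lnorm{\nabla\widetilde\up}2$, which after dividing by $m^{\gamma-1}$ and reweighting yields a factor of order $C_\ef\, m^{-(\gamma-1)/2}$ in front of $\lnorm{\nabla\widetilde\up}2$ rather than $m^{-1}$. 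For $1<\gamma<3$ this is slower decay than $m^{-1}$, though it still tends to zero, so the conclusion stands; you should either adjust the claimed rate to $C_2 \max(m^{-1},m^{-(\gamma-1)/2})$ or choose a weight $m^\beta\lnorm{\rp}2$ tuned so that the two components are damped at comparable rates. With that correction the contraction follows and the abstract fixed-point principle in the footnote applies exactly as you invoke it, since $M_r(m)\times M_\u(m)$ is convex, bounded in the reflexive space $W^{1,p}\times W^{2,p}$, and continuously embedded into the weak metric space.
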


\begin{proof}
We apply  the Banach contraction principle on the mapping $\mathcal{S}_\ufi:M_r(m) \to M_r(m),$
defined as a solution operator to the following problem $\mathcal{S}(r_n )=r_{n+1}$
\begin{align}
 m\sol \u_{n+1}  + \sol( r_{n+1} \u_{n+1} )   = \: & 0, \\ 
(m+r_n)\,\ufi\cdot\nabla\u_{n+1} -\sol\bigl(2(m+r_n)\D(\u_{n+1})\bigr) + \gamma m^{\gamma-1}\nabla r_{n+1}  + \nabla&  R_m(r_n)= (m+r_n)\ef \text{ in }\Omega, \label{MEn}\\
 \u_{n+1}\cdot\en= \: & 0,\nonumber \\
   \en\cdot 2 (m + r_{n}) \D(\u_{n+1})\cdot\boldsymbol{\tau}^k+ f \u_{n+1}\cdot\boldsymbol{\tau}^k  = \: & 0 \text{ on }\partial\Omega,\quad\int_{\Omega}{r_{n+1}\:}\dx=0.
\end{align}
The solvability of system \refx{MEn} in $M_r(m)\times M_\u(m)$ was proven in Proposition~\ref{Prop1}. Thus, $\mathcal{S}$ indeed maps $M_r(m)$ into itself.
We  show that $\mathcal{S}$ is contraction. Let us denote $$\u=\u_{n+1}-\u_n,\:r= r_{n+1}-r_n,\:r_-= r_{n}-r_{n-1},$$  
then the difference $(\u,r)$ satisfies
\begin{align}
m\sol\u +\sol(r\u_{n+1}) + \sol( r_n \u) = \: & 0   \nonumber\\
(m+r_n)\ufi \cdot\nabla \u + r_- \ufi\cdot \nabla \u_n - \sol[(2(m+r_n)\D(\u)) + (2r_-\D(\u_n))]  & \nonumber\\  +  \gamma m^{\gamma-1}\nabla r + \nabla \big(R_m(r_n)-R_m(r_{n-1})\big) = \: & r_-\ef \text{ in }\Omega,\nonumber\\
 \u\cdot\en= \: & 0,\nonumber \\
   \en\cdot[ 2 (m + r_{n}) \D(\u)+  2  r_{-} \D(\u_n)]\cdot\boldsymbol{\tau}^k+ f \u\cdot\boldsymbol{\tau}^k  = \: & 0 \text{ on }\partial\Omega. \label{systemS}
\end{align}
First, let us test the momentum equation of \refx{systemS} by the difference $\u$ and the continuity equation by $\gamma m ^{\gamma-2}r$, this turns after usage of H\"{o}lder's and Young's inequalities into
\begin{multline*}
m\lnorms{\nabla\u}2^2\leq C\biggl( \frac{\lnorm{r_-}2^2 }{m}  \lnorm{\nabla\u_n}\infty^2(1+E^2) +m^{\gamma-2} \lnorm{r_n+r_{n+1}}\infty\lnorm{r_-}2\lnorm{\sol\u}2 \biggr.\\\biggl. +  m^{\gamma-2} \Bigl(\lnorm{\sol\u_{n+1}}\infty \lnorm{r}2^2 +\lnorm{\sol(r_n\u)}2\lnorm{r}2\Bigr)+ \frac{\lnorm{r_-}2^2\lnorm{\ef}3^2}{m}\biggr)   .\end{multline*}
The second term on the right-hand side can be put directly to the left-hand side, and similarly we proceed with the other term containing $\u$, this leads to
\begin{multline}
m\lnorms{\nabla\u}2^2\leq C\biggl( \frac{\lnorm{r_-}2^2 }{m}  \lnorm{\nabla\u_n}\infty^2(1+E^2) +  m^{\gamma-2}  \lnorm{\sol\u_{n+1}}\infty \lnorm{r}2^2 \biggr.\\\biggl.  + m^{2\gamma-5} (\|r_n\|_\infty + \|r_{n+1}\|_\infty)^2\|\|r_{-}\|_2^2  +\frac{\lnorm{r_n}\infty +\lnorm{\nabla r_n}3}{m^{5-2\gamma}} \lnorm{r}2^2+ \frac{\lnorm{r_-}2^2\lnorm{\ef}3^2}{m}\biggr). \label{e101}  \end{multline}
Further, using the Bogovskii type of estimates we obtain
\begin{multline*}
 m^{\gamma-1}\lnorm{r}2^2 \leq C \Bigl( m\lnorm{\ufi}3 \lnorm{\nabla\u}2 + \lnorm{r_-}2\lnorm{\ufi}3\lnorm{\nabla \u_n}\infty   + m\lnorm{\nabla\u}2 \Bigr. \\ \Bigl.  + \lnorm{r_-}2\lnorm{\nabla\u_n}\infty +m^{\gamma-2}\lnorm{r_n+r_{n+1}}\infty \lnorm{r_-}2 + \lnorm{r_-}2\lnorm{\ef}3 \Bigr)\lnorm{r}2  
\end{multline*}
and by means of Young's inequality
\begin{multline*}
 m^{\gamma-1}\lnorm{r}2^2 \leq C \biggl( \frac{\lnorm{\ufi}3^2 \lnorm{\nabla\u}2^2}{m^{\gamma-3}} + \frac{\lnorm{r_-}2^2\lnorm{\ufi}3^2\lnorm{\nabla \u_n}\infty^2 }{m^{\gamma-1}} + \frac{\lnorm{\nabla\u}2^2}{m^{\gamma-3}} \biggr. \\ \biggl.  + \frac{\lnorm{r_-}2^2\lnorm{\nabla\u_n}\infty^2}{m^{\gamma-1}} +\frac{\lnorm{r_n+r_{n+1}}\infty^2 \lnorm{r_-}2^2}{m^{3-\gamma}} + \frac{\lnorm{r_-}2^2\lnorm{\ef}3^2}{m^{\gamma-1}} \biggr)  .
\end{multline*}
Next, using \refx{e101}
\begin{equation*}
 m^{\gamma-1}\lnorm{r}2^2 \leq  C(\ef) {\lnorm{r_-}2^2} \Bigl(\frac{1}{m^{\gamma-1}} + \frac{1}{{m}} \Bigr)  +(1+E^2)\Bigl( C_\ef+ 2\frac{C_\ef}{m}  \Bigr) \lnorm{r}2^2  .
\end{equation*}
 The last term can be put to the left-hand side, while the rest is controlled, hence we obtain
 \begin{equation}
 \lnorm{r}2^2 \leq \frac{C\bigl(\lnorm{\ef}{3}\bigr)}{m^{\gamma-1}} \lnorm{r_-}2^2 = \frac{C_2}{m}\lnorm{r_-}2^2.  \label{contraction}
 \end{equation} 
The mapping is contraction for  $m>C_2$ and bounded in $M_r(m)\subset W^{1,p}(\Omega)$, so we obtain a unique solution in $M_\u(m)\times M_r(m)$ using the same type of contraction result as above.
\end{proof}

\subsubsection*{Elimination of the velocity linearization}
We now consider \refx{RKgiv}--\refx{MEgiv} with boundary conditions \eqref{imper}--\eqref{slip}.
The last step consists in proving that the map $\mathcal{T}(\ufi)=\u$  possesses a fixed point. This will be proved by applying the Schauder fixed point theorem. The previous propositions yield that $\mathcal{T}$ maps $\Mdiv$ into $M_\u(m)$. Since $M_\u(m)\subset \Mdiv $, $\Mdiv$ is a convex and closed subset of $W^{1,\infty}(\Omega)$ and $M_\u(m)$ is a compact subset of $W^{1,\infty}(\Omega)$, it remains to show that $\mathcal{T}$ is continuous on $\Mdiv$. Let us take $\ufi_1,\:\ufi_2$ and the corresponding solutions $(r_1,\u_1)$ and $(r_2,\u_2)$. We would like to estimate $r=r_1-r_2$ and $\u=\u_1-\u_2$ by means of $\ufi=\ufi_1-\ufi_2.$
We have for $k=1,2$
\begin{gather*}
 m\sol \u_k  + \sol( r_k \u_k) = 0,  \\ 
(m+r_k)\ufi_k\cdot\nabla\u_k -\sol\bigl(2(m+r_k)\D(\u_k)\bigr)+ \gamma m^{\gamma-1}\nabla r_k  + \nabla R_m(r_k)=(m+r_k)\ef  . 
\end{gather*}
Taking the difference yields
\begin{align*}
 m\sol \u  + \sol( r \u_1)+  \sol( r_2 \u) =&\:0,  \\ 
(m+r_1)\,\ufi_1\cdot\nabla\u+ (m+r_1)\,\ufi\cdot\nabla\u_2+ r\,\ufi_2\cdot\nabla\u_2 -\sol\bigl[2(m+r_1)\D(\u) +(2r\D(\u_2)) \bigr]\qqquad&\nonumber \\+ \gamma m^{\gamma-1}\nabla r  + \nabla \bigl(R_m(r_1)-R_m(r_2) \bigr)= &\: r\ef  . 
\end{align*}
Further
\begin{align*}
\u \cdot \vektor{n} &=0, \\
\vektor{n} \cdot [ 2(m+r_1)\D(\u) + 2r \D(\u_2) ]\cdot \boldsymbol{\tau}_k + f \u\cdot \boldsymbol{\tau}_k &=0
\end{align*}
on $\partial \Omega$.
The standard energy estimate reads
\begin{multline*}
(m-\lnorm{r_1}\infty)\lnorm{\nabla\u}2^2\leq C \bigl((m\lnorm{\ufi}3\lnorm{\nabla\u_2}2+ \lnorm{r}2 \lnorm{\ufi_2}\infty\lnorm{\nabla\u_2}3)\lnorm{\u}6\bigr.\\
\bigl.\qqquad\quad + \lnorm{r}2\lnorm{\ef}3\lnorm{\u}6 +
\lnorm{r}2\lnorm{\nabla\u_2}\infty\lnorm{\nabla\u}2+m^{\gamma-2}
\lnorm{\sol\u_1}\infty \lnorm{r}2^2\bigr.\\
\bigl. +m^{\gamma-2} \lnorm{\sol( r_2 \u) }2 \lnorm{r}2 \bigr),
\end{multline*}
where we have used the fact that the first term coming from the convective term can be rewritten
$$\inte{(m+r_1)\,\ufi_1\cdot\nabla\frac{\abs{\u}^2}{2}}  = -\inte{ \Bigl( (m+r_1) \sol\ufi_1  \frac{\abs{\u}^2}{2} + \ufi_1 \cdot\nabla r_1  \frac{\abs{\u}^2}{2}\Bigr) }$$
and pushed to the left-hand side, as well as the term from the nonlinear part of the pressure. Thus, after systematic usage of Young's inequality we end up with
\begin{equation}
\begin{split}
m\lnorms{\nabla\u}2^2\leq &\: C \Bigl(m\lnorm{\ufi}3^2\lnorms{\nabla\u_2}2^2+ \frac{\lnorm{r}2^2 \lnorm{\ufi_2}\infty^2\lnorms{\nabla\u_2}3^2}{m} + \frac{\lnorm{r}2^2\lnorms{\ef}3^2}{m}\bigr.\\
\bigl.&  +
\frac{\lnorm{r}2^2\lnorms{\nabla\u_2}\infty^2}{m}+
m^{\gamma-2}\lnorms{\sol\u_1}\infty \lnorm{r}2^2 +m^{\gamma-2}\lnorms{\nabla r_2}p\lnorms{\nabla \ufi }2 \lnorm{r}2 \Bigr)\\
\leq &\: C m \lnorms{\nabla\ufi}2^2 + C m^{\gamma-2} \lnorm{r}2^2 \bigl(C_\ef^2 E^2 +1 \bigr)    .\label{EIdif}
\end{split}
\end{equation}
Next, we use as usually the test function $\Phib=\mathcal{B}[r]$ in the momentum equation to get
\begin{multline*}
m^{\gamma-1}\lnorm{r}2^2\leq C\bigl( m\lnorm{\ufi_1}3\lnorm{\nabla\u}2 \lnorm{\Phib}6+ m\lnorm{\ufi}3 \lnorm{\nabla\u_2}2\lnorm{\Phib}6+ \lnorm{r}2\lnorm{\ef}3\lnorm{\Phib}6 \\
+ \lnorm{r}2\lnorm{\ufi_2}\infty \lnorm{\nabla\u_2}3\lnorm{\Phib}6
+2m \lnorm{\nabla\u}2 \lnorm{\nabla\Phib}2 + \lnorm{r}2\lnorm{\nabla\u_2}\infty\lnorm{\nabla\Phib}2  \bigr)
\end{multline*}
and using $C_{\ef}^2,\lnorm{\ef}3\ll m^{\gamma-1}$
\begin{equation}
m^{\gamma-1}\lnorm{r}2^2\leq C m ^{3-\gamma}\bigl( \lnorm{\ufi_1}3^2\lnorm{\nabla\u}2^2+ \lnorm{\ufi}3^2 \lnorm{\nabla\u_2}2^2 \\
+ \lnorms{\nabla\u}2^2  \bigr).\label{Bodif}
\end{equation}
Combining \refx{EIdif} and \refx{Bodif} yields, using  once more that $C_\ef^2E^2\ll m$
\begin{align*}
m\lnorm{ \u}{1,2}^2
\leq  C (m,\ef) \lnorms{ \ufi}{1,2}^2 .
\end{align*}
Moreover, we can use the higher order estimate following from the previous construction
 \begin{align*}
m\lnorm{ \u}{W^{2,p}(\Omega)}^2
\leq  C (m,\ef)\bigl( \lnorm{ \ufi_1}{W^{1,\infty}(\Omega)}^2 + \lnorm{ \ufi_2}{W^{1,\infty}(\Omega)}^2 +1\bigr).
\end{align*} 
in order to interpolate
\begin{align*} \lnorm{\u}{{1,\infty}}\leq &\: C  \lnorm{\u}{{1,2}}^{\alpha}  \lnorm{\u}{{2,p}}^{1-\alpha} \\
\leq &\: C(m,\ef) \lnorm{\ufi}{{1,2}}^{\alpha} \bigl( \lnorm{\ufi_1}{{1,\infty}}^{1-\alpha}+\lnorm{\ufi_2}{{1,\infty}}^{1-\alpha}   +1\bigr) 
\end{align*}
for some $\alpha\in(0,1)$,
yielding the desired continuity in $W^{1,\infty}(\Omega).$
Thus, we apply the Schauder fixed point theorem, which completes the proof of our main result. Theorem \ref{main} is done.

\subsection*{Acknowledgement} 
The work on this paper was partially conducted during the first author's internship at the Warsaw Center of Mathematics and Computer Science. The first and the third author were supported by Czech Science Foundation (grant no. 16-03230S).
 The second author (PBM) has been partly supported by National
Science Centre grant 2014/14/M/ST1/00108 (Harmonia).

  

\begin{thebibliography}{10}

\bibitem{Alaz06}
Thomas Alazard, \emph{Low {Mach} number limit of the full {Navier}--{Stokes}
  equations}, Arch. Ration. Mech. Anal. \textbf{180} (2006),
  no.~1, 1--73.


\bibitem{Beir87}
Hugo Beir{\~a}o Da~{Veiga}, \emph{An {L$^{p}$}-theory for the
  {$n$}-dimensional, stationary, compressible {Navier}--{Stokes} equations, and
  the incompressible limit for compressible fluids. {The} equilibrium
  solutions}, Comm. Math. Phys. \textbf{109} (1987), no.~2, 229--248.

  
\bibitem{Bre} D. Bresch, \emph{ Shallow-water equations and related topics.}
{\it Handbook of differential equations: evolutionary equations. Vol. V}, 1--104, Handb. Differ. Equ., Elsevier/North-Holland, Amsterdam, 2009.


\bibitem{Bre2} D. Bresch, B. Desjardins, 
\emph{On the construction of approximate solutions for the 2D viscous shallow water model and for compressible Navier-Stokes models.} 
J. Math. Pures Appl. (9) {\bf 86} (2006), no. 4, 362--368. 
  
\bibitem{ChJi00}
Hi~Jun Choe and Bum~Ja Jin, \emph{Existence of solutions of stationary
  compressible {Navier}--{Stokes} equations with large force}, Journal of
  Functional Analysis \textbf{177} (2000), no.~1, 54--88.

\bibitem{DanMu}
 Rapha\"el  Danchin and Piotr B. Mucha, \emph{ The divergence equation in rough spaces}, J. Math. Anal. Appl. 386 (2012), no. 1, 9--31.
  
\bibitem{DeGr99}
Beno\^{i}t Desjardins and Emmanuel Grenier, \emph{Low {Mach} number limit of
  viscous compressible flows in the whole space}, Proceedings of the Royal
  Society of London A: Mathematical, Physical and Engineering Sciences, vol.
  455, The Royal Society, 1999, pp.~2271--2279.

\bibitem{Doetal15}
Changsheng Dou, Fei Jiang, Song Jiang, and Yong-Fu Yang, \emph{Existence of
  strong solutions to the steady {Navier}--{Stokes} equations for a
  compressible heat-conductive fluid with large forces}, Journal de
  Math{\'e}matiques Pures et Appliqu{\'e}es \textbf{103}, no.~5, 1163--1197.

\bibitem{Farw88}
Reinhard Farwig, \emph{Stationary solutions of the {Navier}--{Stokes} equations
  for a compressible, viscous and heat-conductive fluid}, preprint, SFB 256,
  University Bonn, 1988.

\bibitem{Farw89}
Reinhard Farwig, \emph{Stationary solutions of compressible {Navier}--{Stokes}
  equations with slip boundary conditions}, Comm. Partial Differential
  Equations \textbf{14} (1989), no.~11, 1579--1606.

\bibitem{FeNo07}
Eduard Feireisl and Anton{\'i}n Novotn{\'y}, \emph{The low {Mach} number limit
  for the full {N}avier--{Stokes}--{Fourier} system}, Arch. Ration. Mech. Anal.
  \textbf{186} (2007), no.~1, 77--107.


\bibitem{MuAAP} Piotr B.  Mucha, \emph{ On Navier-Stokes equations with slip 
boundary conditions in an infinite pipe}. Acta Appl. Math. 76 (2003), no. 1, 1--15.

\bibitem{MuJDE} Piotr B. Mucha, \emph{On cylindrical symmetric flows through pipe-like domains.}
J. Differential Equations 201 (2004), no. 2, 304--323. 
  
\bibitem{MuPia} Piotr B. Mucha and Tomasz Piasecki, 
\emph{Compressible perturbation of Poiseuille type flow}, 
J. Math. Pures Appl. (9) 102 (2014), no. 2, 338--363.
  
\bibitem{MuPo14}
Piotr B. Mucha and Milan Pokorn{\'y},
       \emph{The rot-div system in exterior domains},
{J. Math. Fluid Mech.} \textbf{16} (2014),
      no.~4, 701--720.

\bibitem{MuRa} Piotr B. Mucha and Reimund Rautmann,
\emph{Convergence of Rothe's scheme for the Navier--Stokes equations with slip conditions in 2D domains}, 
ZAMM Z. Angew. Math. Mech. 86 (2006), no. 9, 691--701. 

\bibitem{No_CMUC} Anton\'\i n Novotn\'y,
\emph{About steady transport equation I. $L^p$-approach in domains
with smooth boundaries},
Comment. Math. Univ. Carolinae 37 (1996), no. 1, 41--87.
     
      
\bibitem{NoNoPo01}
S{\'e}bastien Novo, Anton{\'\i}n Novotn{\'y}, and Milan Pokorn{\'y}, \emph{Some
  notes to the transport equation and to the Green formula}, Rendiconti del
  Seminario Matematico della Universit{\`a} di Padova \textbf{106} (2001),
  65--76.

\bibitem{NoPa94}
Anton{\'i}n Novotn{\'y} and Mariarosaria Padula, \emph{{$L^{p}$}-approach to
  steady flows of viscous compressible fluids in exterior domains}, Arch.
  Ration. Mech. Anal. \textbf{126} (1994), no.~3, 243--297.

\bibitem{NoSt04}
Anton{\'i}n Novotn{\'y} and Ivan Stra\v{s}kraba, \emph{Introduction to the
  mathematical theory of compressible flow}, {Oxford Lecture Ser. Math. Appl.
  27. Oxford: Oxford University Press. xx, 506~p.}, 2004.

\bibitem{Padu87}
Mariarosaria Padula, \emph{Existence and uniqueness for viscous steady
  compressible motions}, Arch. Ration. Mech. Anal. \textbf{97} (1987), no.~2,
  89--102.

\bibitem{PiasJDE}
Tomasz Piasecki, \emph{On an inhomogeneous slip-inflow boundary value problem
  for a steady flow of a viscous compressible fluid in a cylindrical domain},
  J. Differential Equations \textbf{248} (2010), no.~8, 2171--2198.

\bibitem{MuPo07}
 Milan Pokorn{\'y}  and Piotr B. Mucha, \emph{3D steady compressible Navier-Stokes equations}, {Discrete Contin. Dyn. Syst. Ser. S} \textbf{1} (2008), no.~1, 151--163.  
  
\bibitem{Solo73}
Vsevolod A. Solonnikov, \emph{Overdetermined elliptic boundary-value problems}, J. Sov. Math. \textbf{1} (1973), no.~4, 477--512.
  

\end{thebibliography}

\end{document}